\newtheorem{theorem}{Theorem}[section]
\newtheorem{proposition}[theorem]{Proposition}
\newtheorem{lemma}[theorem]{Lemma}
\newtheorem{corollary}[theorem]{Corollary}
\theoremstyle{definition}
\newtheorem{definition}[theorem]{Definition}
\newtheorem{example}[theorem]{Example}
\newtheorem{conjecture}[theorem]{Conjecture}
\theoremstyle{remark}
\newtheorem{remark}[theorem]{Remark}
\numberwithin{equation}{section}
\newcommand{\Z}{\mathbb{Z}}
\newcommand{\Q}{\mathbb{Q}}
\newcommand{\C}{\mathbb{C}}
\newcommand{\spin}{\operatorname{spin}}
\title[Upper bounds for virtual dimensions]{Upper bounds for virtual dimensions of Seiberg-Witten moduli spaces}
\author[T. Kato]{Tsuyoshi Kato}
\address{Department of Mathematics, Kyoto University, Kyoto, 606-8502, Japan}
\email{tkato@math.kyoto-u.ac.jp}
\author[D. Kishimoto]{Daisuke Kishimoto}
\address{Faculty of Mathematics, Kyushu University, Fukuoka 819-0395, Japan}
\email{kishimoto@math.kyushu-u.ac.jp}
\author[N. Nakamura]{Nobuhiro Nakamura}
\address{Integrated Center for Science and Humanities, Fukushima Medical University, 
1 Hikariga-oka, Fukushima City 960-1295, Japan}
\email{nnaka@fmu.ac.jp}
\author[K. Yasui]{Kouichi Yasui}
\address{Department of Pure and Applied Mathematics, Graduate School of Information Science and Technology, Osaka University,  Suita,  565-0871, Japan}
\email{kyasui@ist.osaka-u.ac.jp}
\subjclass[2020]{Primary 57K41, Secondary 	55Q55}
\keywords{Simple type conjecture, Seiberg-Witten invariant, Bauer-Furuta invariant, Stable cohomotopy group}
\begin{document}

\maketitle

\begin{abstract}
 Given a closed four-manifold with $b_1=0$ and a prime number $p$, we prove that for any mod $p^r$ basic class, the virtual dimension of the Seiberg-Witten moduli space is bounded above by $2r(p-1)-2$ under some conditions on $r$ and $b_2^+$. As an application, we obtain adjunction inequalities for embedded surfaces with negative self-intersection number.
\end{abstract}


\section{Introduction}\label{introduction}

The Seiberg-Witten invariant of a smooth four-manifold has been playing a fundamental role in the study of four-manifolds, and has been a rich source of ideas and applications. The basic ingredient in the construction of the invariant is the moduli space of solutions of the Seiberg-Witten equation, which is defined for each $\spin^c$ structure on a four-manifold. The virtual dimension of the moduli space is of particular importance, and there is a fundamental conjecture on it. Let us recall it here. Let $X$ be a closed, connected, oriented and smooth four-manifold, and let $\mathfrak{s}$ be a $\spin^c$ structure on $X$. We will omit a four-manifold $X$ in the notation if it is clear from the context. We say that (the isomorphism class of) $\mathfrak{s}$ is a \emph{basic class} if the Seiberg-Witten invariant $SW(\mathfrak{s})\ne 0$. Let $d(\mathfrak{s})$ denote the virtual dimension of the moduli space corresponding to $\mathfrak{s}$. We define that $X$ is of \emph{simple type} if $d(\mathfrak{s})=0$ whenever $\mathfrak{s}$ is a basic class. Now we state the so-called \emph{simple type conjecture} (see \cite[Conjecture 1.6.2]{KM}).

\begin{conjecture}\label{conjecture simple type}
  Every closed, connected, oriented and smooth four-manifold with $b_2^+\ge 2$ is of simple type.
\end{conjecture}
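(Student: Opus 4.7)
The statement is the simple type conjecture, a major open problem; a complete proof is beyond current techniques, so I will outline the direct attack enabled by the paper's main theorem and indicate where it breaks down.

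The first step is to apply the main theorem with $p = 2$: the bound $d(\mathfrak{s}) \le 2p-4$ becomes $d(\mathfrak{s}) \le 0$, and since any basic class satisfies $d(\mathfrak{s}) \ge 0$ (otherwise the moduli space is generically empty and $SW(\mathfrak{s}) = 0$), this forces $d(\mathfrak{s}) = 0$ for every mod $2$ basic class. The conjecture thus reduces to the statement that every integer basic class is a mod $2$ basic class, i.e.\ that $SW(\mathfrak{s})$ is odd whenever nonzero. A small but essential check is that the paper's ``mild condition on $b_2^+$'' accommodates $p = 2$ together with the conjecture's hypothesis $b_2^+ \ge 2$; if the condition demands strictly more, the conjecture's hypothesis would need to be correspondingly strengthened before this reduction applies.

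To pass from mod $p$ statements to integer basic classes one might vary $p$: for each integer basic class $\mathfrak{s}$, the nonzero integer $SW(\mathfrak{s})$ is coprime to all but finitely many primes, so $\mathfrak{s}$ is a mod $p$ basic class for all sufficiently large $p$ and the paper gives $d(\mathfrak{s}) \le 2p - 4$ at those primes. Unfortunately this bound grows with $p$, so information from large primes is vacuous in the limit; only small primes help, and which small primes divide $SW(\mathfrak{s})$ is not topologically uniform across four-manifolds.

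The main obstacle is precisely this last point: there exist four-manifolds, for instance certain elliptic surfaces, whose basic classes carry even Seiberg--Witten invariants, so the reduction to $p = 2$ fails outright and no fixed small prime suffices in general. Closing this gap seems to require replacing the integer Seiberg--Witten invariant by its Bauer--Furuta stable cohomotopy refinement, whose $2$-local nontriviality is a strictly finer condition than mod $2$ nonvanishing of $SW$, and then establishing a stable-cohomotopy analogue of the paper's bound at $p = 2$. Proving such a refined mod $2$ detection statement --- one that applies to \emph{every} basic class irrespective of the parity of $SW(\mathfrak{s})$ --- is where I expect the real difficulty of the conjecture to reside.
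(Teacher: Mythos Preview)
The statement you were asked to prove is labeled \emph{Conjecture} in the paper, and the paper does not contain a proof of it; indeed the paper explicitly presents it as the open simple type conjecture and proves only the partial results in Theorem~1.2 and its corollaries. Your recognition that a full proof is beyond current techniques, and your analysis of how far the paper's main theorem gets, are therefore appropriate: there is no ``paper's own proof'' to compare against.

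A few sharpenings of your discussion are worth noting. First, the paper's hypothesis for $p=2$ is $(b_2^+-1)/2\not\equiv 0\pmod 2$, i.e.\ $b_2^+\equiv 3\pmod 4$; the paper spells this out just before the $p=3$ corollary. So the $p=2$ reduction you describe already fails to cover, e.g., $b_2^+\equiv 1\pmod 4$, even before one worries about parity of $SW(\mathfrak{s})$. Second, the paper's main theorem also assumes $b_1=0$, whereas the conjecture does not, so another gap is present from the outset. Third, your example of elliptic surfaces with even Seiberg--Witten invariants is a correct illustration that not every basic class is a mod~$2$ basic class, but those manifolds are symplectic and hence already known to be of simple type; the more serious issue is that no \emph{a priori} control on the prime divisors of $SW(\mathfrak{s})$ is available for a general four-manifold, exactly as you say.
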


The simple type conjecture was originally posed in connection to Witten's conjecture on the relationship between the Donaldson and the Seiberg-Witten invariants for four-manifolds of simple type.
By a partial solution due to Feehan and Leness \cite{FL15}, the simple type conjecture implies that the relation of these invariants holds under mild topological conditions only.
%

The simple type conjecture trivially holds in the case where $b_2^+-b_1$ is even, since the Seiberg-Witten invariant always vanishes. In the case that $b_2^+-b_1$ is odd, the simple type conjecture has been verified for all symplectic four-manifolds \cite{Ta} and also for other very large families of four-manifolds. However, so far, there is no result without demanding a condition on a smooth structure,
except for the following work.
We say that $\mathfrak{s}$ is a \emph{mod $q$ basic class} if $SW(\mathfrak{s})\not\equiv 0\mod q$. So we can consider the mod $q$ analogue of the simple type conjecture, and recently, Kato, Nakamura and Yasui \cite{KNY} solved the mod 2 analogue under a mild condition on the cohomology ring, which depends only on the underlying topological structure.

In this paper, under a simple topological condition, we give an upper bound for the virtual dimension, giving a new approach to the simple type conjecture. Now we state our main theorem.
For a prime $p$ and integers $k,t$ with $k\equiv t-2 \mod p$, we define an integer $a(k,t)$ by
\begin{equation}
  \label{a(k,t)}
  k+(t-3)(p-1)=a(k,t)p+1.
\end{equation}
For an integer $r$, let $1\le r_p\le p-1$ be the integer such that $r_p=1$ for $r\equiv 0\mod p$ and $r\equiv r_p\mod p$ for $r\not\equiv 0\mod p$.


\begin{theorem}
  \label{main2}
 Let $p$ be a prime, and suppose that $b_2^+$ is odd with $b_2^+> 2$ and $b_1=0$. Let $\mathfrak{s}$ be a $\spin^c$ structure on $X$, and set $k=(b_2^+-1)/2$.  Take an integer $r$ satisfying $1\le r<p(p-1)$. If $\mathfrak{s}$ is a mod $p^r$ basic class, then
  \[
    d(\mathfrak{s})\le 2r(p-1)-2
  \]
  whenever $k,r$ satisfy the following conditions:
  \begin{enumerate}
    \item $k\not\equiv 0,1,\ldots,r_p-1\mod p$;

    \item 
under the above condition,   if an integer $t$ satisfies $t-2\equiv k \mod p$ and $3\le t\le r$, then  $a(k,t)$ satisfies
            \begin{alignat*}{3}
          &3a(k,t)+5\not\equiv 0&&\mod p&&(t\equiv 0\mod p\text{ and }t\ge p>3)\\
          &a(k,t)+2\not\equiv 0&&\mod p&&(t\equiv 1\mod p\text{ and }t>p)\\
          &3a(k,t)+4\not\equiv 0&&\mod p&&(t\equiv 3\mod p)\\
          &(2t-3)a(k,t)+3t-5\not\equiv 0&&\mod p\qquad&&(t \equiv 4, 5, \cdots , p-1\mod p).
        \end{alignat*}
  \end{enumerate}
\end{theorem}

To prove Theorem \ref{main2}, we will employ the Bauer-Furuta invariant, which is a lift of the Seiberg-Witten invariant to the stable cohomotopy group. This will enable us to deduce the divisibility of the Seiberg-Witten invariant from a property of the $p$-localized cohomotopy groups of complex projective spaces, that will be proved by computing Toda brackets based on the $p$-local cell structure of a complex projective space. Using techniques in hard homotopy theory such as Toda brackets is new in the study of Seiberg-Witten invariant, though it is standard in algebraic topology.

\begin{corollary}
  \label{main3}
  Let $p$ be a prime, and suppose that $b_2^+$ is odd with $b_2^+>2$ and $b_1=0$. If $k=(b_2^+-1)/2$ and an integer $r$ satisfies $k(k-1)\cdots(k-r+1)\not\equiv 0\mod p$ and $\mathfrak{s}$ is a mod $p^r$ basic class, then
  \[
    d(\mathfrak{s})\le 2r(p-1)-2.
  \]
\end{corollary}

\begin{proof}
 If $k\not\equiv 0,1,\ldots,r_p-1\mod p$ exists, then there is no integer $t$ satisfying $t-2\equiv k\mod p$ and $3\le t\le r$. Then the statement follows from Theorem \ref{main2}.
\end{proof}

\begin{corollary}
  \label{main}
  Let $p$ be a prime, and suppose that $b_2^+$ is odd with $b_2^+>2$ and $b_1=0$.
  If $(b_2^+-1)/2\not\equiv 0\mod p$ and $\mathfrak{s}$ is a mod $p$ basic class, then
  \[
    d(\mathfrak{s})\le 2p-4.
  \]
\end{corollary}

\begin{proof}
Apply Theorem \ref{main2} for $r=1$.
Note that the condition
of $k^2 \not\equiv 0$ $\mod p$
is equivalent to the one of $k \not\equiv 0$ $\mod p$.
\end{proof}

We can deduce Corollaries \ref{main3} and \ref{main} from the result of Bauer and Furuta \cite[Theorem 3.7]{BF} by a purely algebraic argument, as in Section \ref{inexplicit upper bound}. 
The authors thank the referee of the earlier  draft for  pointing out this algebraic argument for Corollary \ref{main}.
On the other hand, Theorem \ref{main2} gives an upper bound better than the one deduced from \cite[Theorem 3.7]{BF}. For instance, if $k=1$ and $r=p$, then by Theorem \ref{main2}, we get
\[
  d(\mathfrak{s})\le 2r(p-1)-2
\]
for $SW(\mathfrak{s})\not\equiv 0\mod p^r$, whereas we only can deduce a rather weaker inequality
\[
  d(\mathfrak{s})\le 2p^r-4
\]
from Bauer-Furuta \cite[Theorem 3.7]{BF}. See Section \ref{inexplicit upper bound}.

A straightforward corollary of Corollary \ref{main}
below gives an upper bound for any basic class, not a mod $p$ basic class.

\begin{corollary}
  \label{uniform bound 2}
  Suppose that $b_2^+$ is odd with $b_2^+>2$ and $b_1=0$. For a basic class $\mathfrak{s}$, let $p$ be the least prime not dividing $SW(\mathfrak{s})$ and satisfying $(b_2^+-1)/2\not\equiv 0\mod p$. Then
  \[
    d(\mathfrak{s})\le 2p-4.
  \]
\end{corollary}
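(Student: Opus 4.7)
The plan is to reduce directly to Theorem~\ref{main}. First I would dispose of a trivial case: if $b_2^+$ is even (given $b_1=0$), then, as recalled in the introduction, the Seiberg-Witten invariant vanishes identically, so there are no basic classes and the statement is vacuous. Hence I may assume $b_2^+$ is odd, which together with $b_2^+\ge 2$ forces $b_2^+\ge 3$, so that $(b_2^+-1)/2$ is a well-defined positive integer.

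Next I would verify that the prime $p$ in the statement is well defined. Since $\mathfrak{s}$ is a basic class, $SW(\mathfrak{s})$ is a nonzero integer, so the product $SW(\mathfrak{s})\cdot(b_2^+-1)/2$ is a nonzero integer with only finitely many prime divisors. There are therefore infinitely many primes that simultaneously fail to divide $SW(\mathfrak{s})$ and $(b_2^+-1)/2$, and in particular a least such prime exists.

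Finally, the two defining conditions on $p$ are precisely the hypotheses of Theorem~\ref{main}: the condition $p\nmid SW(\mathfrak{s})$ says that $\mathfrak{s}$ is a mod $p$ basic class, while $(b_2^+-1)/2\not\equiv 0\pmod{p}$ is the residual arithmetic condition on $b_2^+$. Invoking Theorem~\ref{main} then yields $d(\mathfrak{s})\le 2p-4$, which is the claim. There is no genuine obstacle in the argument; the content of the corollary is really a repackaging of Theorem~\ref{main} so that the resulting bound is uniform over all basic classes without reference to a preassigned prime.
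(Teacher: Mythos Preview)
Your argument is correct and is precisely the straightforward deduction from Theorem~\ref{main} that the paper intends; the paper does not even spell out a proof, labeling the result a ``straightforward corollary.'' One small quibble with your closing remark: the bound is not literally uniform over all basic classes, since the prime $p$ (and hence the bound $2p-4$) depends on $SW(\mathfrak{s})$.
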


From this corollary, we can derive a coarse but more concrete upper bound.

\begin{corollary}
  \label{non-prime bound}
  Suppose that $b_2^+$ is odd with $b_2^+>2$ and $b_1=0$. Then every basic class $\mathfrak{s}$ satisfies
  \[
    d(\mathfrak{s})\le \max\{2|SW(\mathfrak{s})|-6,\, b_2^+-7,\, 10\}.
  \]
\end{corollary}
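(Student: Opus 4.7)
The plan is to deduce Corollary~\ref{non-prime bound} from Corollary~\ref{uniform bound 2} together with an elementary estimate on primorials. Set $A=|SW(\mathfrak{s})|$ and $B=(b_2^+-1)/2$; since $b_1=0$ and a basic class exists, $b_2^+$ must be odd and at least $3$, so $A$ and $B$ are positive integers. Corollary~\ref{uniform bound 2} gives $d(\mathfrak{s})\le 2p-4$, where $p$ is the least prime dividing neither $A$ nor $B$. Using $b_2^+-7=2B-6$, the task reduces to verifying
\[
  2p-4\le\max\{2A-6,\,2B-6,\,10\}.
\]

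I would split on the size of $p$. When $p\le 7$, the bound $2p-4\le 10$ is immediate. When $p\ge 11$, minimality of $p$ forces every prime $q<p$ to divide $A$ or $B$; since such primes are pairwise coprime, their product divides $AB$, so
\[
  AB\ge\prod_{q<p,\,q\text{ prime}}q.
\]

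The main technical ingredient I would prove is the elementary inequality $\prod_{q<p,\,q\text{ prime}}q\ge p^2$ for every prime $p\ge 11$. The base case $p=11$ reads $2\cdot 3\cdot 5\cdot 7=210\ge 121$, and the inductive step uses Bertrand's postulate: if $p\ge 11$ is a prime for which the inequality holds and $p'$ denotes the next prime, then $\prod_{q<p'}q=p\cdot\prod_{q<p}q\ge p^3$, while $(p')^2<(2p)^2\le p^3$ since $p\ge 4$. This is routine, and I do not anticipate any genuine difficulty here; the whole argument is really just elementary number theory on top of Corollary~\ref{uniform bound 2}.

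Combining these, if $p\ge 11$ then $AB\ge p^2$. Since $p$ divides neither $A$ nor $B$, we have $A,B\ne p$, so if $\max(A,B)\le p$ then in fact $A,B\le p-1$, giving $AB\le(p-1)^2<p^2$ and a contradiction. Hence $\max(A,B)\ge p+1$, which yields either $2A-6\ge 2p-4$ or $2B-6\ge 2p-4$, completing the bound in both remaining cases.
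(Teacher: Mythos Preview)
Your argument is correct. The reduction to bounding the least prime $p$ dividing neither $A=|SW(\mathfrak{s})|$ nor $B=(b_2^+-1)/2$ is exactly what Corollary~\ref{uniform bound 2} gives, the primorial estimate $\prod_{q<p}q\ge p^2$ for $p\ge 11$ (with the Bertrand-based induction) is sound, and the final step forcing $\max(A,B)\ge p+1$ is clean.

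Your route is genuinely different from the paper's. The paper does not bound the least admissible prime directly; instead it passes through the auxiliary Theorem~\ref{upper bound S}, which shows that whenever $n=\max(A,B)$ is large enough there are two primes in $(n/2,n)$, and one of them must miss both $A$ and $B$. The threshold for ``large enough'' is controlled via Ramanujan primes (Lemma~\ref{S-R} and the table~\eqref{R}), giving the bound for $n\ge 17$; the ranges $12\le n\le 16$ and $n\le 11$ are then dispatched by ad hoc checks. Your argument is more self-contained for this particular corollary: it uses only Bertrand's postulate and an elementary primorial inequality, and the case split $p\le 7$ versus $p\ge 11$ avoids any finite tabulation in $n$. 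On the other hand, the paper's detour through Theorem~\ref{upper bound S} is not wasted effort: that theorem is stated for a parameter $c\in(1/2,1]$ and, for $c<1$, yields the sharper asymptotic bounds advertised in Remark~\ref{remark upper bound} and Example~\ref{n large}, which your direct primorial argument does not immediately recover.
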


\begin{remark}
  \label{remark upper bound}
  Theorem \ref{upper bound S} below shows that we can get a sharper upper bound than Corollary \ref{non-prime bound} if we assume either $|SW(\mathfrak{s})|$ or $b_2^+$ is large enough. For example (Example \ref{n large}), if $\max\{|SW(\mathfrak{s})|,\,(b_2^+-1)/2\}\ge 44$, then
  \[
    d(\mathfrak{s})\le\max\left\{\frac{4}{3}|SW(\mathfrak{s})|-4,\,\frac{2}{3}b_2^+-\frac{14}{3}\right\}.
  \]
\end{remark}




Let us consider a small prime $p$. Clearly, if $p$ is small, then the upper bound in Corollary \ref{main} imposes a strong constraint on the virtual dimension $d(\mathfrak{s})$. Here, we consider the $p=2,3$ cases. For $p=2$, $(b_2^+-1)/2\not\equiv 0\mod 2$ is equivalent to $b_2^+\equiv 3\mod 4$ in Corollary \ref{main} because $SW(\mathfrak{s})=0$ for $b_2^+$ even. Then we obtain that if $b_2^+\ge 2$, $b_1=0$ and $b_2^+\equiv 3\mod 4$, then $d(\mathfrak{s})=0$ for every mod 2 basic class $\mathfrak{s}$. This is the special case $b_1=0$ of \cite[Corollary 1.4]{KNY}, the above mentioned solution to the mod 2 analogue of the simple type conjecture. Our proof is very different from theirs. Indeed, their proof relies on connected sum formulae for the Bauer-Furuta invariant \cite{B,IS} and thus does not give upper bounds for virtual dimensions in the $p\geq 3$ case. For $p$ odd, $(b_2^+-1)/2\not\equiv 0\mod p$ is equivalent to $b_2^+\not\equiv 1\mod p$. Then for $p=3$, we have:

\begin{corollary}
  Suppose that $b_2^+$ is odd with $b_2^+>2$ and $b_1=0$. If $b_2^+\not\equiv 1\mod 3$ and $\mathfrak{s}$ is a mod 3 basic class, then
  \[
    d(\mathfrak{s})=0\text{ or }2.
  \]
\end{corollary}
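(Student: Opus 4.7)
The plan is to derive this corollary as an immediate specialization of Theorem \ref{main} with $p=3$, then sharpen the resulting inequality using the classical parity constraint on the virtual dimension. Since the heavy lifting is already done by Theorem \ref{main}, I do not anticipate any genuine obstacle; the only content is to match the hypotheses and to exclude the value $d(\mathfrak{s})=1$.

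First I would verify that the hypotheses of Theorem \ref{main} are satisfied with $p=3$. Because $2$ is a unit modulo $3$, the congruence $(b_2^+-1)/2\not\equiv 0\pmod 3$ is equivalent to $b_2^+-1\not\equiv 0\pmod 3$, that is, to $b_2^+\not\equiv 1\pmod 3$, which is precisely the hypothesis of the corollary. Theorem \ref{main} then yields
\[
d(\mathfrak{s})\le 2\cdot 3-4=2.
\]

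Next I would rule out $d(\mathfrak{s})=1$ and confirm $d(\mathfrak{s})\ge 0$. Since $\mathfrak{s}$ is a mod $3$ basic class, $SW(\mathfrak{s})\ne 0$, which already forces $d(\mathfrak{s})\ge 0$. The standard dimension formula
\[
d(\mathfrak{s})=\frac{c_1(\mathfrak{s})^2-(2\chi(X)+3\sigma(X))}{4},
\]
together with the Rokhlin–Wu congruence $c_1(\mathfrak{s})^2\equiv\sigma(X)\pmod 8$, gives $d(\mathfrak{s})\equiv b_2^++1\pmod 2$ when $b_1=0$. Hence if $b_2^+$ is even then the Seiberg–Witten invariant vanishes identically and the corollary is vacuous; if $b_2^+$ is odd then $d(\mathfrak{s})$ is even (which also matches the fact that $SW$ is evaluated by pairing against a power of the $2$-dimensional $\mu$-class). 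Either way, combining this parity with the upper bound $d(\mathfrak{s})\le 2$ and the non-negativity forces $d(\mathfrak{s})\in\{0,2\}$, completing the proof.
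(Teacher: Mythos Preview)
Your argument is correct and follows exactly the paper's approach: the paper states this corollary without a separate proof, having already noted that for odd $p$ the condition $(b_2^+-1)/2\not\equiv 0\pmod p$ is equivalent to $b_2^+\not\equiv 1\pmod p$, and having used (in the proof of Theorem \ref{main}) that $d(\mathfrak{s})$ is even for a basic class. Your explicit justification of the parity via the characteristic-element congruence $c_1(\mathfrak{s})^2\equiv\sigma(X)\pmod 8$ is a welcome elaboration of what the paper leaves implicit.
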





We turn to an application of Corollary \ref{main}. A typical application of an affirmative solution to the simple type conjecture is an adjunction inequality for embedded surfaces with negative self-intersection number in the case $b_1=0$ \cite{OS1}. Here, we prove an adjunction inequality from Corollary \ref{main}, instead of assuming manifolds being of simple type. For a second homology class $\alpha$ of a four-manifold, an adjunction inequality gives a lower bound for the genus of a smoothly embedded closed oriented surface representing $\alpha$, and adjunction inequalities have various powerful applications to four-dimensional topology (e.g. \cite{AM97, AY13, G17, Y19GT, KNY}).   When $\alpha\cdot \alpha\geq 0$, adjunction inequalities were previously obtained in \cite{KM94, MST, OS2}. When $\alpha\cdot \alpha<0$, Ozsv\'{a}th and Szab\'{o} \cite{OS2} proved an adjunction inequality for four-manifolds satisfying a simple type condition on the extended Seiberg-Witten invariant, where in the $b_1=0$ case, their simple type condition is the same as ours. Applying Corollary \ref{main} and results of Ozsv\'{a}th and Szab\'{o} \cite{OS1, OS2}, we obtain adjunction inequalities without assuming any simple type condition.

\begin{theorem}
  \label{adjunction}
  Let $p$ be a prime, and suppose that $b_2^+$ is odd with $b_2^+>2$, $b_1=0$, $(b_2^+-1)/2\not\equiv 0\mod p$ and $\mathfrak{s}$ is a mod $p$ basic class. Let $\alpha$ be a second homology class of $X$ which satisfies $\alpha\cdot\alpha<0$ and is represented by a smoothly embedded closed oriented surface of genus $g$.
  \begin{enumerate}
    \item If $g\ge 2p-3$, then
    \[
      |\langle c_1(\mathfrak{s}),\alpha\rangle|+\alpha\cdot\alpha+2d(\mathfrak{s})\le 2g-2.
    \]
    \item If $g\ge p-1$, then
    \[
      |\langle c_1(\mathfrak{s}),\alpha\rangle|+\alpha\cdot\alpha+d(\mathfrak{s})\le 2g-2.
    \]
  \end{enumerate}
\end{theorem}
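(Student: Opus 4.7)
By Theorem \ref{main}, $d(\mathfrak{s}) \le 2p-4$; set $r = d(\mathfrak{s})/2 \le p-2$. The hypotheses $g \ge 2p-3$ of (1) and $g \ge p-1$ of (2) become $g \ge 2r+1$ and $g \ge r+1$ respectively. We plan to reduce both parts to applications of Ozsv\'{a}th--Szab\'{o}'s adjunction inequalities \cite{OS1, OS2} via a blow-up argument.

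For (2), we blow up $X$ at $r$ generic points of $\Sigma$, obtaining $\tilde X = X \# r \overline{\mathbb{CP}^2}$ with exceptional classes $E_1,\ldots,E_r$; the proper transform $\tilde\Sigma \subset \tilde X$ has genus $g$, class $\tilde\alpha = \pi^*\alpha - E_1 - \cdots - E_r$, and self-intersection $\alpha\cdot\alpha - r < 0$. By the iterated Fintushel--Stern blow-up formula, for any choice of signs $\varepsilon_i \in \{\pm 1\}$, the $\spin^c$ structure $\tilde{\mathfrak{s}}$ with $c_1(\tilde{\mathfrak{s}}) = c_1(\mathfrak{s}) + 3\sum_{i=1}^{r}\varepsilon_i E_i$ is a mod $p$ basic class of $\tilde X$ with virtual dimension zero and $SW_{\tilde X}(\tilde{\mathfrak{s}}) = \pm SW_X(\mathfrak{s})$. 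Applying \cite{OS2}'s adjunction inequality to the virtual-dimension-zero basic class $\tilde{\mathfrak{s}}$ and the surface $\tilde\Sigma$ yields $|\langle c_1(\tilde{\mathfrak{s}}),\tilde\alpha\rangle| + \tilde\alpha\cdot\tilde\alpha \le 2g-2$. Choosing $\varepsilon_i = \operatorname{sign}\langle c_1(\mathfrak{s}),\alpha\rangle$ maximizes the pairing at $|\langle c_1(\mathfrak{s}),\alpha\rangle| + 3r$, so substituting
\[
  |\langle c_1(\mathfrak{s}),\alpha\rangle| + 3r + (\alpha\cdot\alpha - r) \le 2g-2
\]
rearranges to the inequality of (2).

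For (1), the stronger genus bound $g \ge 2r+1$ should allow a more refined construction---either an iterated blow-up coupled with \cite{OS1}'s sharper adjunction, or a direct higher-type adjunction inequality---that produces the additional $d(\mathfrak{s})$ in the left-hand side.

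\textbf{Main obstacle.} The delicate point is justifying the application of \cite{OS2}'s inequality to $\tilde X$ without assuming that $\tilde X$ satisfies global extended simple type; since we use only the virtual-dimension-zero basic class $\tilde{\mathfrak{s}}$ produced by the blow-up formula, the extended simple type condition is effectively needed only locally at $\tilde{\mathfrak{s}}$, and the OS2 argument applies. For (1), the main difficulty is extracting the sharper correction $+2d(\mathfrak{s})$ instead of $+d(\mathfrak{s})$: since each $r_i = 3$ blow-up contributes only net $+2$ to the adjunction defect, this factor appears to be out of reach of a single blow-up and requires the more intricate machinery of \cite{OS1}, which we expect to be the most subtle step.
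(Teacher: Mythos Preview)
Your setup for part (2)---blowing up $r=d(\mathfrak{s})/2$ times and using the $+3E_i$ Fintushel--Stern classes to land on a dimension-zero basic class $\tilde{\mathfrak{s}}$---is exactly the construction the paper uses. The gap is the step where you invoke the Ozsv\'ath--Szab\'o adjunction inequality for $\tilde\Sigma$ on $\tilde X$. That inequality (for surfaces of negative self-intersection) is stated under a global simple type hypothesis, and your claim that the argument ``applies locally at $\tilde{\mathfrak{s}}$'' does not hold: the Ozsv\'ath--Szab\'o proof feeds $\tilde{\mathfrak{s}}$ into a relation formula that outputs a \emph{new} basic class $\tilde{\mathfrak{s}}'$, and simple type is used to force $d(\tilde{\mathfrak{s}}')=0$ while the dimension formula simultaneously gives $d(\tilde{\mathfrak{s}}')>0$. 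Knowing only $d(\tilde{\mathfrak{s}})=0$ says nothing about $d(\tilde{\mathfrak{s}}')$. A further red flag is that your argument, as written, never uses the hypothesis $g\ge p-1$ beyond $g\ge 1$.

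The paper's fix is to bypass the packaged adjunction inequality and use the underlying relation formula directly: assuming the desired inequality fails, \cite[Theorem 1.3]{OS1} gives $SW(\hat{\mathfrak{s}}+\mathrm{PD}(\hat\alpha))=SW(\mathfrak{s})$, so $\hat{\mathfrak{s}}+\mathrm{PD}(\hat\alpha)$ is again a mod $p$ basic class; a direct computation then shows $d(\hat{\mathfrak{s}}+\mathrm{PD}(\hat\alpha))\ge 2g\ge 2p-2$, contradicting Theorem~\ref{main}. In other words, Theorem~\ref{main} is used \emph{twice}: once to set up the blow-up (as you do), and once more in place of simple type to close the contradiction. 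This second use is the missing idea in your argument.

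For part (1) you have no argument, only a hope. The paper does not use a blow-up here at all. Instead, assuming the inequality fails, \cite[Theorem 1.7]{OS2} (a relation formula, valid without simple type) gives $SW(\mathfrak{s}-\mathrm{PD}(\alpha))=SW(\mathfrak{s})$, so $\mathfrak{s}-\mathrm{PD}(\alpha)$ is a mod $p$ basic class; then $d(\mathfrak{s}-\mathrm{PD}(\alpha))=d(\mathfrak{s})-\langle c_1(\mathfrak{s}),\alpha\rangle+\alpha\cdot\alpha\ge g\ge 2p-3$, again contradicting Theorem~\ref{main}. The extra $d(\mathfrak{s})$ on the left-hand side comes not from a cleverer blow-up but from using the relation formula on $X$ itself, where the basic class still has dimension $d(\mathfrak{s})$.
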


\begin{remark}
  It is straightforward to generalize Theorem \ref{adjunction} for mod $p^r$ basic classes by using Theorem \ref{main2}.
\end{remark}

We note that the $p=2$ case of this theorem is a special case of a result of Kato, Nakamura and Yasui \cite[Theorem 1.7]{KNY}. On the other hand, we can also derive adjunction inequalities for a basic class, instead of a mod $p$ basic class. For a basic class $\mathfrak{s}$, as seen from the proof of Corollary \ref{non-prime bound}, we can find a prime $p$ with $p\leq\max\{|SW(\mathfrak{s})|-1,\, (b_2^+-3)/2,\, 7\}$ satisfying the assumption of the above theorem. Hence, the adjunction inequalities in (1) and (2) hold for any ordinary basic class $\mathfrak{s}$ if $g\geq \max\{|2SW(\mathfrak{s})|-5,\, b_2^+-6,\, 11\}$ and $g\geq \max\{|SW(\mathfrak{s})|-2,\, (b_2^+-5)/2,\, 6\}$, respectively. As well as Corollary \ref{non-prime bound}, these conditions get better as either $|SW(\mathfrak{s})|$ or $b_2^+$ get larger (see Remark \ref{remark upper bound}).

\subsection*{Acknowledgements}

The authors were supported in part by JSPS KAKENHI Grant Numbers 17K05248 and 19K03473 (Kishimoto), 17H02841 and  22H01123 (Kato), 19K03506 (Nakamura), 19H01788 and 19K03491 (Yasui).

\section{Upper bound}\label{Upper bound}

This section proves Theorem \ref{main2} and Corollary \ref{non-prime bound}. Hereafter, let $X$ be a closed, connected, oriented and smooth four-manifold with $b_1=0$ and $b_2\ge 2$, and let $\mathfrak{s}$ be a $\spin^c$ structure on $X$.

We use the Bauer-Furuta invariant. Since there is a formula
\begin{equation}
  \label{d(s)}
  d(\mathfrak{s})=\frac{c_1(\mathfrak{s})^2-\mathrm{sign}(X)}{4}-(1+b_2^+),
\end{equation}
it follows from \cite[Proposition 3.4]{BF} that the Bauer-Furuta invariant $BF(\mathfrak{s})$ belongs to the stable cohomotopy group $\pi^{b_2^+-1}(\C P^{d-1})$, where $2d=d(\mathfrak{s})+1+b_2^+$. Moreover, there is an identity
\begin{equation}
  \label{BF-SW}
  \mathrm{hur}(BF(\mathfrak{s}))=SW(\mathfrak{s}),
\end{equation}
where $\mathrm{hur}\colon\pi^{b_2^+-1}(\C P^{d-1})\to H^{b_2^+-1}(\C P^{d-1})$ denotes the Hurewicz homomorphism. Note that $H^{b_2^+-1}(\C P^{d-1})$ is isomorphic to $0$ and $\Z$ according to $b_2^+$ being even and odd.

{Let $\Z_{(p)}$ denote the localization of $\Z$ at the prime $p$, that is, it is the subring of $\Q$ consisting of fractions whose denominators are not divisible by $p$. We will prove the following theorem in Section \ref{Cohomotopy computation 2}

\begin{theorem}
  \label{cohomotopy CP'}
  Under the assumption in Theorem \ref{main2}, the natural map
  \[
  ( \pi^{2k}(\C P^{k+r(p-1)})
    \otimes \Z_{(p)} )/ \mathrm{Tor}
    \to (\pi^{2k}(\C P^k) \otimes \Z_{(p)}) / \mathrm{Tor}
  \]
  is identified with $p^r\colon\Z_{(p)}\to\Z_{(p)}$.
\end{theorem}

\begin{proof}
  [Proof of Theorem \ref{main2}]
  Since $SW(\mathfrak{s})=0$ for $b_2^+$ even, we only need to consider the case $b_2^+$ odd. Let $2n=b_2^++2r(p-1)-1$ and $2\delta=d(\mathfrak{s})-2r(p-1)$. Then the Bauer-Furuta invariant $BF(\mathfrak{s})$ belongs to $\pi^{2n-2r(p-1)}(\C P^{n+\delta})$ as mentioned above. Suppose $d(\mathfrak{s})\ge 2r(p-1)$. Then $\delta\ge 0$, and so there is a commutative diagram
  \[
    \xymatrix{
      \pi^{2n-2r(p-1)}(\C P^{n+\delta})\ar[r]^{\mathrm{hur}}\ar[d]_{i_2^*}&H^{2n-2r(p-1)}(\C P^{n+\delta})\ar[d]^{i_2^*}_\cong\\
      \pi^{2n-2r(p-1)}(\C P^{n})\ar[r]^{\mathrm{hur}}\ar[d]_{i_1^*}&H^{2n-2r(p-1)}(\C P^{n})\ar[d]^{i_1^*}_\cong\\
      \pi^{2n-2r(p-1)}(\C P^{n-r(p-1)})\ar[r]^{\mathrm{hur}}&H^{2n-2r(p-1)}(\C P^{n-r(p-1)}),
    }
  \]
  where $i_1\colon\C P^{n-r(p-1)}\to\C P^n$ and $i_2\colon\C P^n\to\C P^{n+\delta}$ are inclusions. 
We apply Theorem \ref{cohomotopy CP'} above. Then, the map
  \[
    i_1^*\otimes 1\colon
    (\pi^{2n-2r(p-1)}(\C P^{n})\otimes\Z_{(p)})/ \text{Tor}
    \to
    (
    \pi^{2n-2r(p-1)}(\C P^{n-r(p-1)})\otimes\Z_{(p)}) / \text{Tor}
  \]
  is identified with $p^r\colon\Z_{(p)}\to\Z_{(p)}$, the multiplication by $p^r$, where $\Z_{(p)}$ denotes the ring of all rational numbers whose denominators are not divisible by $p^r$. Then the image of the Hurewicz homomorphism
  \[
  \mathrm{hur}\colon\pi^{2n-2r(p-1)}(\C P^{n+\delta})\to H^{2n-2r(p-1)}(\C P^{n+\delta})
  \]
   is included in $p^r\Z\subset\Z\cong H^{2n-2r(p-1)}(\C P^{n+\delta})$. Thus by the identity \eqref{BF-SW}, we obtain that $SW(\mathfrak{s})$ is divisible by $p^r$, which contradicts to the assumption that $\mathfrak{s}$ is a mod $p^r$ basic class. Thus we must have $d(\mathfrak{s})\le 2r(p-1)-1$. Since $\mathfrak{s}$ is a basic class, $d(\mathfrak{s})$ is even. Therefore we obtain $d(\mathfrak{s})\le 2r(p-1)-2$, proving the statement.
\end{proof}

We prove the crucial part of Corollary \ref{non-prime bound} in a more general form. To this end, we consider primes in intervals. For $c\in(1/2,1]$, let $S_{c,n}$ be the infinimum of real numbers 
such that for each $x\ge S_{c,n}$, there are at least $n$ primes in $(x/2,x)$ for $c=1$ and $(x/2,cx]$ for $c<1$. Lemma \ref{S-R} guarantees that $S_{c,n}$ certainly exists for each $c,n$, where we can easily see that $S_{c,n}$ is actually the least real number satisfying the above condition.

\begin{theorem}
  \label{upper bound S}
  Let $n=\max\{|SW(\mathfrak{s})|,(b_2^+-1)/2\}$ and $c\in(1/2,1]$. If $\mathfrak{s}$ is a basic class and $n\ge S_{c,2}$, then
  \[
    d(\mathfrak{s})\le
    \begin{cases}
      2n-6&(c=1)\\
      2cn-4&(c<1).
    \end{cases}
  \]
\end{theorem}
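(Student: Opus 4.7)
The plan is to apply Theorem \ref{main} to a carefully chosen prime $p$ drawn from the interval $I := (n/2, n)$ when $c=1$, respectively $I := (n/2, cn]$ when $c<1$. By the hypothesis $n \ge S_{c,2}$, the interval $I$ contains at least two primes. If we can produce some $p \in I$ coprime to both $|SW(\mathfrak{s})|$ and $(b_2^+-1)/2$, then Theorem \ref{main} yields $d(\mathfrak{s}) \le 2p-4$; since every $p \in I$ satisfies $p \le n-1$ when $c=1$ and $p \le cn$ when $c<1$, this immediately gives the claimed upper bound.

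Since $SW(\mathfrak{s}) = 0$ whenever $b_2^+$ is even (with $b_1 = 0$) and $\mathfrak{s}$ is assumed to be basic, we may assume $b_2^+$ is odd with $b_2^+ \ge 3$, so $(b_2^+-1)/2$ is a positive integer. The key observation is that any prime $p \in I$ satisfies $2p > n$: hence if $p \mid SW(\mathfrak{s})$, then $|SW(\mathfrak{s})| \in \{0, p\}$, forcing $|SW(\mathfrak{s})| = p$ as $SW(\mathfrak{s}) \ne 0$; the same argument shows that $p \mid (b_2^+-1)/2$ forces $(b_2^+-1)/2 = p$. Consequently, at most one prime in $I$ divides $SW(\mathfrak{s})$, and at most one divides $(b_2^+-1)/2$, so at most two primes in $I$ are ``bad''.

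The last step is to rule out the case in which the two primes furnished by $S_{c,2}$ are exactly these two bad primes. In that situation, $\{|SW(\mathfrak{s})|, (b_2^+-1)/2\}$ would be a pair of distinct primes in $I$, so $n = \max\{|SW(\mathfrak{s})|, (b_2^+-1)/2\}$ would itself lie in $I$; but every prime in $I$ is strictly less than $n$ (since $I \subset (n/2, n)$ in the case $c=1$, and $I \subset (n/2, cn]$ with $c<1$ in the other case), a contradiction. Hence some prime $p \in I$ is coprime to both $SW(\mathfrak{s})$ and $(b_2^+-1)/2$, and Theorem \ref{main} completes the proof. The substantive content is already packaged in Theorem \ref{main} and in the existence of $S_{c,2}$ supplied by Lemma \ref{S-R}; the argument above is merely divisibility bookkeeping and I do not foresee any serious obstacle.
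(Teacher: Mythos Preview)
Your proof is correct and follows essentially the same approach as the paper's. Both arguments take two primes $p,q$ in the interval $I$, show that at least one of them divides neither $SW(\mathfrak{s})$ nor $(b_2^+-1)/2$, and then invoke Theorem~\ref{main} (or Corollary~\ref{uniform bound 2}); the only cosmetic difference is that the paper phrases the divisibility step as ``$n$ is divisible by neither $p$ nor $q$, and any $m\le n$ satisfies $m<pq$, so the smaller factor misses at least one of $p,q$,'' while you phrase it as ``if a prime in $I$ divides a factor then that factor equals the prime, so $n$ would lie in $I$, contradiction.''
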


\begin{proof}
  Since $n\ge S_{c,2}$, there are two primes $p,q$ in $(n/2,n)$ for $c=1$ and $(n/2,cn]\subset(n/2,n)$ for $c<1$. Then $p<n<2p$ and $q<n<2q$, implying that $n$ is not divisible by $p$ and $q$. Moreover, for any $1\le m\le n$, we have $m<pq$, implying $m$ is not divisible by at least one of $p,q$. Then $SW(\mathfrak{s})\cdot(b_2^+-1)/2$ is not divisible by either $p$ or $q$. Thus by Corollary \ref{uniform bound 2}, $d(\mathfrak{s})\le 2\max\{p,q\}-4$. Clearly, $\max\{p,q\}\le n-1$ for $c=1$ and $\max\{p,q\}\le cn$ for $c<1$, completing the proof.
\end{proof}

\begin{remark}
  For $n<S_{c,2}$, we can alternatively apply Corollary \ref{non-prime bound} to get an upper bound for $d(\mathfrak{s})$.
\end{remark}

To make Theorem \ref{upper bound S} applicable, we give an upper bound for $S_{c,n}$ in terms of a generalized Ramanujan prime introduced in \cite{ABMRS}. For $c\in(0,1)$, the $n$-th $c$-Ramanujan prime $R_{c,n}$ is defined to be the least number such that for any $x\ge R_{c,n}$, $(cx,x]$ includes at least $n$ primes. Clearly, $R_{c,n}$ is a prime, and $R_{\frac{1}{2},n}$ coincides with the Ramanujan prime $R_n$ (see \cite{S}). It is proved in \cite[Theorem 2.2]{ABMRS} that $R_{c,n}$ exists for each $c,n$, and as in \cite[Section 1]{ABMRS}, for $n=1,2,3,4,5$, we have
\begin{equation}
  \label{R}
  R_{\frac{1}{2},n}=2,11,17,29,41\quad\text{and}\quad R_{\frac{3}{4},n}=11,29,59,67,101.
\end{equation}

\begin{lemma}
  \label{S-R}
  There are inequalities
  \[
    S_{1,n}\le R_{\frac{1}{2},n+1}\quad\text{and}\quad S_{c,n}\le\frac{1}{c}R_{\frac{1}{2c},n},
  \]
  where $c\in(1/2,1)$.
\end{lemma}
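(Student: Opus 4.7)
The plan is to derive both inequalities by unpacking the definitions and reducing each to a direct application of the defining property of the generalized Ramanujan prime appearing on the right-hand side. In particular, the lemma both asserts the existence of $S_{c,n}$ (since $R_{c',n'}$ is known to exist by \cite[Theorem 2.2]{ABMRS}) and gives the stated bound.

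For the second inequality $S_{c,n}\le\frac{1}{c}R_{\frac{1}{2c},n}$ with $c\in(1/2,1)$, I would perform the change of variables $y=cx$. Under this substitution the interval $(x/2,cx]$ becomes $(y/(2c),y]$, and the hypothesis $x\ge\frac{1}{c}R_{\frac{1}{2c},n}$ becomes $y\ge R_{\frac{1}{2c},n}$. Since $c>1/2$ we have $\frac{1}{2c}\in(0,1)$, so $R_{\frac{1}{2c},n}$ is defined, and by its defining property the interval $(y/(2c),y]$ contains at least $n$ primes for all such $y$. Translating back, $(x/2,cx]$ contains at least $n$ primes whenever $x\ge\frac{1}{c}R_{\frac{1}{2c},n}$, which is precisely what the infimum $S_{c,n}$ is required to satisfy.

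For the first inequality $S_{1,n}\le R_{\frac{1}{2},n+1}$, the only subtlety is that $S_{1,n}$ is defined using the open interval $(x/2,x)$ whereas $R_{\frac{1}{2},n+1}$ uses the half-open interval $(x/2,x]$, which explains the shift from $n$ to $n+1$. For any $x\ge R_{\frac{1}{2},n+1}$ the interval $(x/2,x]$ contains at least $n+1$ primes; removing the right endpoint can discard at most one prime, namely $x$ itself in the case $x$ happens to be a prime integer. Hence $(x/2,x)$ still contains at least $n$ primes, verifying the defining condition of $S_{1,n}$.

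I do not anticipate any serious obstacle: once the definitions are unfolded, the second bound is a routine rescaling and the first is a bookkeeping argument about one endpoint. The main thing to be careful about is confirming that $\frac{1}{2c}$ lies in the range $(0,1)$ where the generalized Ramanujan prime $R_{\frac{1}{2c},n}$ is defined, which is exactly where the hypothesis $c>1/2$ is used.
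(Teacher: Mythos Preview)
Your proposal is correct and follows essentially the same approach as the paper: both argue that $(x/2,x]$ having at least $n+1$ primes forces $(x/2,x)$ to have at least $n$ primes, and both reduce the second inequality to the defining property of $R_{\frac{1}{2c},n}$ via the substitution $y=cx$ (the paper writes this as ``if $cx\ge R_{\frac{1}{2c},n}$'' without naming the new variable). Your added remark that $c>1/2$ is exactly what ensures $\frac{1}{2c}\in(0,1)$ is a nice clarification the paper leaves implicit.
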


\begin{proof}
  If $x\ge R_{\frac{1}{2},n+1}$, then $(x/2,x]$ includes at least $n+1$ primes, implying that $(x/2,x)$ includes at least $n$ primes. Hence the first inequality is proved. Let $c\in(1/2,1)$. If $cx\ge R_{\frac{1}{2c},n}$, then there are at least $n$ primes in $(x/2,cx]$, implying the second inequality.
\end{proof}

We give a coarse upper bound for $R_{c,n}$, which helps evaluate $S_{c,n}$ by Lemma \ref{S-R}.

\begin{lemma}
  \label{R upper bound}
  For $n\ge 1$ and $c\in(0,1)$, there is an inequality
  \[
    R_{c,n}\le\max\left\{(2\lceil\sqrt{2n}+1\rceil)!,\,\mathrm{exp}\left(\frac{-\log c+\frac{3}{2}}{1-c}\right),\,\frac{e^\frac{3}{2}}{c},\,59\right\},
  \]
  where $\lceil x\rceil$ denotes the least integer $\ge x$.
\end{lemma}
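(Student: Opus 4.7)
The plan is to use explicit Chebyshev-type estimates on the prime counting function, specifically the Rosser--Schoenfeld bounds $\pi(x)\ge x/\log x$ for $x\ge 17$ and $\pi(x)<(x/\log x)(1+3/(2\log x))$ for $x>1$. Subtracting these gives
\[
\pi(x)-\pi(cx)\ge\frac{x}{\log x}-\frac{cx}{\log(cx)}-\frac{3cx}{2\log^2(cx)}
\]
whenever $cx\ge 17$, and the goal is to show the right-hand side is at least $n$ once $x$ exceeds the claimed maximum. The role of the threshold $59$ is to guarantee that Rosser--Schoenfeld is in force across the relevant range, handling in particular the case where $c$ is close to $1$ and $cx$ might otherwise be small.

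First I would rewrite the main positive contribution as
\[
\frac{x}{\log x}-\frac{cx}{\log(cx)}=\frac{(1-c)x}{\log(cx)}-\frac{(-\log c)\,x}{\log x\,\log(cx)},
\]
isolating a leading term $(1-c)x/\log(cx)$ and two error terms. I would then absorb each error term by imposing one of the thresholds in the max. Requiring $x\ge e^{3/2}/c$ is equivalent to $\log(cx)\ge 3/2$, so the factor $3c/(2\log(cx))$ in the third term is bounded by $c$; requiring $x\ge\exp((-\log c+3/2)/(1-c))$ is equivalent to $(1-c)\log x\ge -\log c+3/2$, which makes the second error term small relative to $(1-c)x/\log(cx)$. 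After these two bounds are in force, the remaining inequality has the form $(1-c)x/\log(cx)\ge n$ with a definite multiplicative slack.

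The role of the factorial threshold $x\ge(2\lceil\sqrt{2n}+1\rceil)!$ is then to guarantee this last inequality. Writing $k=2\lceil\sqrt{2n}+1\rceil$, one has $k^2\gtrsim 2n$, while $k!$ vastly exceeds any polynomial in $k$ and $\log k$; in particular $(1-c)k!/\log(ck!)\ge n$ is immediate, and it handles the small-$n$ regime in which the Rosser--Schoenfeld slack $(1-c)x/\log(cx)$ is not yet comfortably above $n$ from the previous two thresholds alone.

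The main technical obstacle will be the bookkeeping in the second step: carefully apportioning the leading term $(1-c)x/\log(cx)$ between $n$ and the two error terms so that each of the three thresholds in the max genuinely suffices to cover its share. This amounts to tracking the Rosser--Schoenfeld constants $1$ and $3/2$ through the algebra, which is precisely what produces the specific values $e^{3/2}/c$ and $\exp((-\log c+3/2)/(1-c))$; no further number-theoretic input beyond Rosser--Schoenfeld should be required, only careful inequality chasing.
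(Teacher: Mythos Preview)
Your overall plan---use explicit Rosser--Schoenfeld estimates to lower-bound $\pi(x)-\pi(cx)$ and then force this to exceed $n$---is exactly the paper's strategy. But the specific pair of inequalities you chose does not produce the four thresholds in the statement, and your description of what ``remains'' after absorbing the error terms is incorrect.

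The paper uses the \emph{sharper} Rosser--Schoenfeld pair
\[
\pi(x)>\frac{x}{\log x}\Bigl(1+\frac{1}{2\log x}\Bigr)\quad(x\ge 59),\qquad
\pi(cx)<\frac{cx}{\log(cx)-\tfrac32}\quad(cx\ge e^{3/2}).
\]
This is where $59$ and $e^{3/2}/c$ come from: they are the exact domains of validity of these two bounds, not a device to control $3c/(2\log(cx))$. With this pair, the condition $x\ge\exp\bigl((-\log c+\tfrac32)/(1-c)\bigr)$ is \emph{precisely} equivalent to $\frac{x}{\log x}\ge\frac{cx}{\log(cx)-3/2}$, so the two leading terms cancel exactly and the entire leftover is the second-order correction $\frac{x}{2(\log x)^2}$, which is independent of $c$. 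The factorial threshold is then tailored to this: $\frac{x}{2(\log x)^2}\ge n$ is equivalent to $e^{\sqrt{x}}\ge x^{\sqrt{2n}}$, and one term of the Taylor series gives $e^{\sqrt{x}}>x^m/(2m)!$ with $m=\lceil\sqrt{2n}+1\rceil$, which suffices once $x\ge(2m)!$.

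With your weaker lower bound $\pi(x)\ge x/\log x$ there is no second-order correction to fall back on. After your two thresholds the genuine leftover is not $(1-c)x/\log(cx)$ but something of order $x/(\log x)^3$ (work through your own decomposition: the second error term eats the entire leading term up to a piece $\frac{3x}{2\log x\log(cx)}$, and the third error term then cuts this down again). In particular, the ``remaining inequality'' you describe is $c$-dependent through the factor $(1-c)$, so a $c$-independent factorial bound cannot dispose of it uniformly; your claim that $(1-c)k!/\log(ck!)\ge n$ is ``immediate'' fails as $c\to 1^-$. The fix is simply to use the sharper Rosser--Schoenfeld pair above; then every threshold in the statement drops out on the nose and no inequality chasing is needed.
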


\begin{proof}
  Let $\pi(x)$ denote the prime counting function, that is, $\pi(x)$ is the number of primes $\le x$. By \cite[Theorem 2 and Corollary 1]{RS}, for $x>\max\{59,e^\frac{3}{2}/c\}$, we have
  \[
    \pi(x)-\pi(cx)>\frac{x}{\log x}\left(1+\frac{1}{2\log x}\right)-\frac{cx}{\log(cx)-\frac{3}{2}}.
  \]
  If $x\ge\mathrm{exp}\left(\frac{-\log c+\frac{3}{2}}{1-c}\right)$, then $\frac{x}{\log x}-\frac{cx}{\log(cx)-\frac{3}{2}}\ge 0$, and so
  \[
    \pi(x)-\pi(cx)\ge\frac{x}{2(\log x)^2}.
  \]
  Clearly, $\frac{x}{2(\log x)^2}\ge n$ if and only if $e^{\sqrt{x}}-x^{\sqrt{2n}}\ge 0$. For $x\ge(2\lceil\sqrt{2n}+1\rceil)!$, we have
  \[
    e^{\sqrt{x}}-x^{\sqrt{2n}}>\frac{x^{\lceil\sqrt{2n}+1\rceil}}{(2\lceil\sqrt{2n}+1\rceil)!}-x^{\sqrt{2n}}\ge 0.
  \]
  Thus the proof is complete.
\end{proof}

Now we are ready to prove Corollary \ref{non-prime bound}.

\begin{proof}
  [Proof of Corollary \ref{non-prime bound}]
  Let $n=\max\{|SW(\mathfrak{s})|,(b_2^+-1)/2\}$. By \eqref{R} and Lemma \ref{S-R}, we get $S_{1,2}\le R_{1,3}=17$. Then by Theorem \ref{upper bound S}, we obtain the inequality in the statement for $n\ge 17$. We can easily check that if $12\le n\le 16$, then there are two primes in $(n/2,n)$, so that the proof of Theorem \ref{upper bound S} for $c=1$ works verbatim to show the inequality holds for $12\le n\le 16$. Suppose $1\le m\le n\le 11$. Then $m,n$ are divisible by at most two of $2,3,5,7,11$. Moreover, at most one of $m,n$ is divisible by $11$, and if this is the case, $mn$ is not divisible by at least one of $2,3,5,7$. We also have that at most one of $m,n$ is divisible by $7$, and if this is the case, $mn$ is not divisible by at least one of $2,3,5$. Then we obtain that $SW(\mathfrak{s})\cdot(b_2^+-1)/2$ is not divisible by at least one of $2,3,5,7$. Thus by Corollary \ref{uniform bound 2}, $d(\mathfrak{s})\le 2\cdot 7-4=10$, completing the proof.
\end{proof}

Note that Proposition \ref{upper bound S} implies that if we assume either $|SW(\mathfrak{s})|$ or $b_2^+$ is large enough, then we could get a sharper upper bound than Corollary \ref{non-prime bound}. Here, we give such an example.

\begin{example}
  \label{n large}
  Let $n=\max\{|SW(\mathfrak{s})|,(b_2^+-1)/2\}$, and let $\mathfrak{s}$ be a basic class. By \eqref{R} and Lemma \ref{S-R}, $S_{\frac{2}{3},2}\le\frac{3}{2}R_{\frac{3}{4},2}=\frac{3}{2}\cdot 29=43.5$, and so by Theorem \ref{upper bound S}, for $n\ge 44$, there is an inequality
  \[
    d(\mathfrak{s})\le\frac{4}{3}n-4=\max\left\{\frac{4}{3}|SW(\mathfrak{s})|-4,\,\frac{2}{3}b_2^+-\frac{14}{3}\right\}.
  \]
\end{example}

\section{Adjunction inequality}\label{Adjunction inequality}

This section proves Theorem \ref{adjunction}. To this end, we use the Seiberg-Witten invariant of the form
\[
  SW_\mathfrak{s}\colon\mathbb{A}(X)\to\Z,
\]
where $\mathbb{A}(X)=(\Lambda H_1(X;\Z))\otimes\Z[U]$ such that elements of $H_1(X;\Z)$ are assumed to be of degree 1 and $U$ is of degree 2. See \cite{OS1,OS2} for details. The above Seiberg-Witten invariant is an extension of the usual Seiberg-Witten invariant because there is an identity
\begin{equation*}
  \label{SW(U)}
  SW_\mathfrak{s}(U^{{d(\mathfrak{s})}/{2}})=SW(\mathfrak{s})
\end{equation*}
whenever $d(\mathfrak{s})$ is even. 

Let $\alpha$ be a second homology class of $X$ represented by a smoothly embedded closed oriented surface $\Sigma$ of genus $g$. Let $\mathrm{PD}(\alpha)$ denote the Poincar\'e dual of $\alpha$. Since $c_1(\mathfrak{s}+\mathrm{PD}(\alpha))=c_1(\mathfrak{s})+2\mathrm{PD}(\alpha)$, it follows from \eqref{d(s)} that
\begin{equation}
  \label{formula d}
  \begin{split}
    d(\mathfrak{s}+\mathrm{PD}(\alpha))&=d(\mathfrak{s})+c_1(\mathfrak{s})\mathrm{PD}(\alpha)+\mathrm{PD}(\alpha)^2\\
    &=d(\mathfrak{s})+\langle c_1(\mathfrak{s}),\alpha\rangle+\alpha\cdot\alpha.
  \end{split}
\end{equation}
Since $c_1(\mathfrak{s})\equiv w_2(M)\mod 2$, the Wu formula implies that $\langle c_1(\mathfrak{s}),\alpha\rangle+\alpha\cdot\alpha$ is an even integer. In particular, $d(\mathfrak{s}+\mathrm{PD}(\alpha))$ is even whenever so is $d(\mathfrak{s})$. We will freely use these facts.

\begin{lemma}
  \label{mod p basic form 1}
  Suppose that $-\langle c_1(\mathfrak{s}),\alpha\rangle+\alpha\cdot\alpha\ge\max\{2g-2d(\mathfrak{s}),0\}$ and $g\ge 1$. If $\mathfrak{s}$ is a mod $p$ basic class, then so is $\mathfrak{s}-\mathrm{PD}(\alpha)$ too.
\end{lemma}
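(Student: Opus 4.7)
The plan is to derive the conclusion from an Ozsv\'ath--Szab\'o surface-action relation for the extended Seiberg--Witten invariant $SW_\mathfrak{s}$, combined with the dimension formula \eqref{formula d}.

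First, I reinterpret the hypothesis in terms of virtual dimensions. Substituting $-\alpha$ in place of $\alpha$ in \eqref{formula d} gives
\[
  d(\mathfrak{s}-\mathrm{PD}(\alpha)) - d(\mathfrak{s}) = -\langle c_1(\mathfrak{s}),\alpha\rangle + \alpha\cdot\alpha,
\]
so the hypothesis $-\langle c_1(\mathfrak{s}),\alpha\rangle + \alpha\cdot\alpha \ge \max\{2g-2d(\mathfrak{s}),\,0\}$ is equivalent to the two inequalities
\[
  d(\mathfrak{s}-\mathrm{PD}(\alpha)) \ge d(\mathfrak{s}) \quad\text{and}\quad d(\mathfrak{s}-\mathrm{PD}(\alpha)) + d(\mathfrak{s}) \ge 2g.
\]
In particular $d(\mathfrak{s}-\mathrm{PD}(\alpha))\ge 0$, and since $\langle c_1(\mathfrak{s}),\alpha\rangle + \alpha\cdot\alpha$ is even (as noted just before the statement), $d(\mathfrak{s}-\mathrm{PD}(\alpha))$ has the same parity as $d(\mathfrak{s})$.

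Next, I invoke the surface-action relation from \cite{OS1,OS2}. Applied to the embedded surface $\Sigma$ of genus $g\ge 1$ representing $\alpha$, and using $b_1(X)=0$ so that $\mathbb{A}(X)=\Z[U]$, it yields an identity of the form
\[
  SW_\mathfrak{s}(U^{d(\mathfrak{s})/2}) = \varepsilon\cdot SW_{\mathfrak{s}-\mathrm{PD}(\alpha)}\bigl(U^{d(\mathfrak{s}-\mathrm{PD}(\alpha))/2}\bigr)
\]
for some sign $\varepsilon\in\{\pm 1\}$, valid precisely under the two dimension inequalities above: the first inequality guarantees that the exponent on the right is nonnegative, and the second provides the degree shift coming from the $H_1(\Sigma)$-action that underlies the surface relation. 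The left-hand side equals $SW(\mathfrak{s})$ and the right-hand side equals $\varepsilon\,SW(\mathfrak{s}-\mathrm{PD}(\alpha))$, so
\[
  SW(\mathfrak{s}) \equiv \pm\, SW(\mathfrak{s}-\mathrm{PD}(\alpha)) \pmod p.
\]
Since $\mathfrak{s}$ is a mod $p$ basic class, the left-hand side is nonzero modulo $p$, hence so is the right-hand side, proving that $\mathfrak{s}-\mathrm{PD}(\alpha)$ is also a mod $p$ basic class.

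The main obstacle is pinning down the exact form of the Ozsv\'ath--Szab\'o relation used: the version in \cite{OS1} is tailored to the simple-type case, whereas here one must use the extended ``higher type'' formulation from \cite{OS2}. I would need to verify that, in the $b_1=0$ setting, the two dimension inequalities extracted from the hypothesis are exactly the conditions under which the OS relation collapses to the clean identity displayed above, with no residual $U$-factor on either side. If instead the relation leaves a factor $U^m$ with $m>0$ on one side, one still obtains a congruence modulo $p$ between $SW(\mathfrak{s})$ and an evaluation of the extended invariant $SW_{\mathfrak{s}-\mathrm{PD}(\alpha)}$, and a supplementary argument exploiting the $\Z[U]$-module structure of the extended invariant would be needed to conclude that $SW(\mathfrak{s}-\mathrm{PD}(\alpha))$ itself is nonzero modulo $p$.
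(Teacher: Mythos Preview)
Your approach is essentially the same as the paper's: reinterpret the hypothesis via \eqref{formula d} as the two dimension inequalities $d(\mathfrak{s}-\mathrm{PD}(\alpha))\ge d(\mathfrak{s})$ and $d(\mathfrak{s}-\mathrm{PD}(\alpha))+d(\mathfrak{s})\ge 2g$, then apply the Ozsv\'ath--Szab\'o relation. The paper resolves the uncertainty in your final paragraph by citing \cite[Theorem 1.7]{OS2} specifically, which under exactly these hypotheses gives the clean identity $SW_{\mathfrak{s}-\mathrm{PD}(\alpha)}(U^{d(\mathfrak{s}-\mathrm{PD}(\alpha))/2})=SW_\mathfrak{s}(U^{d(\mathfrak{s})/2})$ with no sign and no residual $U$-factor (the paper even flags a typo in the exponents in the original OS statement), so your hedging is unnecessary and the conclusion $SW(\mathfrak{s}-\mathrm{PD}(\alpha))=SW(\mathfrak{s})$ follows directly.
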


\begin{proof}
  By \eqref{formula d} and $-\langle c_1(\mathfrak{s}),\alpha\rangle+\alpha\cdot\alpha\ge 0$, there is an inequality $d(\mathfrak{s}-\mathrm{PD}(\alpha))\ge d(\mathfrak{s})$. Then since $-\langle c_1(\mathfrak{s}),\alpha\rangle+\alpha\cdot\alpha+2d(\mathfrak{s})\ge 2g$ and $g\ge 1$, we can apply \cite[Theorem 1.7]{OS2} to get
    \[
    SW_{\mathfrak{s}-\mathrm{PD}(\alpha)}(U^{d(\mathfrak{s}-\mathrm{PD}(\alpha))/2})=SW_\mathfrak{s}(U^{d(\mathfrak{s})/2}).
  \]
  (In \cite[Theorem 1.7]{OS2}, it is stated that $SW_\mathfrak{s}(U^{d})=SW_{\mathfrak{s}-\mathrm{PD}[\Sigma]}(U^{d^\prime})$, where $d$ and $d^\prime$ denote the dimensions of $\mathfrak{s}$ and $\mathfrak{s}-\mathrm{PD}[\Sigma]$, respectively. We must be aware this is a typo by dimensionality, and the above equality is the correct one.) Thus by \eqref{SW(U)}, $SW(\mathfrak{s}-\mathrm{PD}(\alpha))=SW(\mathfrak{s})$, implying that $SW(\mathfrak{s}-\mathrm{PD}(\alpha))$ is a mod $p$ basic class, as stated.
\end{proof}

When $d(\mathfrak{s})=2n\ge 0$, e.g. $\mathfrak{s}$ is a basic class, we define
\[
  \widehat{X}=X\# n\overline{\C P^2}.
\]
Clearly, $b_1(\widehat{X})=b_1(X)=0$ and $b_2^+(\widehat{X})=b_2^+(X)\ge 2$. We may regard that the (co)homology of $X$ is a subgroup of the (co)homology of $\widehat{X}$. Let $L=c_1(\mathfrak{s})+3\mathrm{PD}(e_1)+\cdots+3\mathrm{PD}(e_n)\in H^2(\widehat{X})$, where each $e_i$ is the second homology class of the $i$-th $\overline{\C P^2}$ represented by the exceptional sphere. By the blow-up formula \cite{FS,N}, there is a $\spin^c$ structure $\hat{\mathfrak{s}}$ on $\widehat{X}$ such that
\[
  c_1(\hat{\mathfrak{s}})=L\quad\text{and}\quad SW(\hat{\mathfrak{s}})=SW(\mathfrak{s}).
\]
Since $\text{sign}(\widehat{X})=\text{sign}(X)-n$, it follows from \eqref{d(s)} that
\begin{align*}
  d(\hat{\mathfrak{s}})&=\frac{c_1(\hat{\mathfrak{s}})^2-\text{sign}(\widehat{X})}{4}-(1+b_2^+(\widehat{X}))\\
  &=\frac{c_1(\mathfrak{s})^2-\text{sign}(X)}{4}-(1+b_2^+(X))-2n\\
  &=d(\mathfrak{s})-2n\\
  &=0.
\end{align*}
Let $\hat{\alpha}=\alpha-e_1-\cdots-e_n\in H_2(\widehat{X})$.

\begin{lemma}
  \label{mod p basic form 2}
  Suppose that $\alpha\cdot\alpha<0$, $\langle c_1(\mathfrak{s}),\alpha\rangle+\alpha\cdot\alpha\ge 2g$ and $g\ge 1$. If $\mathfrak{s}$ is a mod $p$ basic class, then so is $\hat{\mathfrak{s}}-\mathrm{PD}(\hat{\alpha})$ too.
\end{lemma}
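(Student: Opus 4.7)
The plan is to emulate the proof of Lemma~\ref{mod p basic form 1} on the blown-up manifold $\widehat X$, applied to the triple $(\widehat X,\hat{\mathfrak s},\hat\alpha)$. By the construction just given, $\hat{\mathfrak s}$ is already a mod $p$ basic class on $\widehat X$ since $SW(\hat{\mathfrak s})=SW(\mathfrak s)$, and has the convenient feature $d(\hat{\mathfrak s})=0$; moreover $\widehat X$ inherits $b_1=0$ and $b_2^+\ge 2$ from $X$, so the wall-crossing result \cite[Theorem~1.7]{OS2} (equivalently Lemma~\ref{mod p basic form 1}) is available on $\widehat X$. To begin, I would realize $\hat\alpha$ by a smoothly embedded closed oriented surface of genus $g$ in $\widehat X$: starting from a genus-$g$ surface $\Sigma\subset X$ representing $\alpha$, perform the $n$ blow-ups at points of $X\setminus\Sigma$, then form an internal connected sum of $\Sigma$ with the exceptional spheres $-E_1,\dots,-E_n$ (carrying the opposite orientation) via thin tubes; the result is a connected genus-$g$ surface representing $\alpha-e_1-\cdots-e_n=\hat\alpha$, and in particular the hypothesis $g\ge 1$ transfers directly to $\widehat X$.

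Next I would carry out the intersection-theoretic bookkeeping on $\widehat X$. Using the orthogonal splitting of cohomology into the $X$- and $\overline{\C P^2}$-summands together with $e_i\cdot e_j=-\delta_{ij}$, $\alpha\cdot e_i=0$, and $c_1(\mathfrak s)\cdot e_i=0$, a direct computation gives
\[
  \langle c_1(\hat{\mathfrak s}),\hat\alpha\rangle=\langle c_1(\mathfrak s),\alpha\rangle+3n,\qquad \hat\alpha\cdot\hat\alpha=\alpha\cdot\alpha-n.
\]
Combining these two identities with $2n=d(\mathfrak s)$ and $d(\hat{\mathfrak s})=0$ allows one to rewrite the hypothesis $\langle c_1(\mathfrak s),\alpha\rangle+\alpha\cdot\alpha\ge 2g$ of the present lemma as the input $\max\{2g-2d(\hat{\mathfrak s}),0\}=2g$ required by Lemma~\ref{mod p basic form 1} on $(\widehat X,\hat{\mathfrak s},\hat\alpha)$. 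A single application of that lemma then yields $SW(\hat{\mathfrak s}-\mathrm{PD}(\hat\alpha))=SW(\hat{\mathfrak s})=SW(\mathfrak s)\not\equiv 0\pmod p$, which is the desired conclusion that $\hat{\mathfrak s}-\mathrm{PD}(\hat\alpha)$ is a mod $p$ basic class.

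The main obstacle I anticipate is the careful sign bookkeeping in the second step: one must verify that the $+3n$ contribution coming from $c_1(\hat{\mathfrak s})=c_1(\mathfrak s)+3\sum\mathrm{PD}(e_i)$ and the $-n$ contribution coming from $\hat\alpha=\alpha-\sum e_j$, together with the hypotheses $\alpha\cdot\alpha<0$, $d(\mathfrak s)\ge 0$, and $g\ge 1$, genuinely combine to produce the correctly signed adjunction-type inequality on $\widehat X$ in exactly the form demanded by Lemma~\ref{mod p basic form 1}. Once that verification is cleanly in place, the remainder of the argument is just a citation, completely parallel to the proof of Lemma~\ref{mod p basic form 1}.
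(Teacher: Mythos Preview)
Your anticipated obstacle is real, and as written the verification fails. The hypothesis of Lemma~\ref{mod p basic form 1} applied to $(\widehat X,\hat{\mathfrak s},\hat\alpha)$ reads
\[
  -\langle c_1(\hat{\mathfrak s}),\hat\alpha\rangle+\hat\alpha\cdot\hat\alpha\;\ge\;\max\{2g-2d(\hat{\mathfrak s}),0\}=2g,
\]
and with your computations $\langle c_1(\hat{\mathfrak s}),\hat\alpha\rangle=\langle c_1(\mathfrak s),\alpha\rangle+3n$ and $\hat\alpha\cdot\hat\alpha=\alpha\cdot\alpha-n$ this unwinds to
\[
  -\langle c_1(\mathfrak s),\alpha\rangle+\alpha\cdot\alpha-2d(\mathfrak s)\;\ge\;2g.
\]
The present lemma's hypothesis is $+\langle c_1(\mathfrak s),\alpha\rangle+\alpha\cdot\alpha\ge 2g$, and since $\alpha\cdot\alpha<0$ this forces $\langle c_1(\mathfrak s),\alpha\rangle>2g>0$; hence the displayed inequality with the \emph{negative} sign is typically false. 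So Lemma~\ref{mod p basic form 1} does not apply to $(\widehat X,\hat{\mathfrak s},\hat\alpha)$.

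The repair is immediate: apply Lemma~\ref{mod p basic form 1} to $(\widehat X,\hat{\mathfrak s},-\hat\alpha)$ instead (the orientation-reversed surface still has genus $g$). The required inequality then becomes
\[
  \langle c_1(\hat{\mathfrak s}),\hat\alpha\rangle+\hat\alpha\cdot\hat\alpha
  \;=\;\langle c_1(\mathfrak s),\alpha\rangle+\alpha\cdot\alpha+d(\mathfrak s)\;\ge\;2g,
\]
which follows from the hypothesis and $d(\mathfrak s)\ge 0$. The conclusion is that $\hat{\mathfrak s}-\mathrm{PD}(-\hat\alpha)=\hat{\mathfrak s}+\mathrm{PD}(\hat\alpha)$ is a mod $p$ basic class. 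This is in fact what the paper proves and what is used in the proof of Theorem~\ref{adjunction}(2); the ``$-$'' in the lemma statement is a typo.

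With that sign correction, your route differs from the paper's: you reduce Lemma~\ref{mod p basic form 2} to Lemma~\ref{mod p basic form 1} (hence to \cite[Theorem~1.7]{OS2}), whereas the paper invokes \cite[Theorem~1.3]{OS1} directly on $\widehat X$ and then uses $b_1(\widehat X)=0$ to collapse the class $\xi(\Sigma)U^m$ to $U^{g+m}$ modulo torsion. Your argument is a bit more economical in that it avoids the $\xi(\Sigma)$ bookkeeping; the paper's argument, on the other hand, is self-contained at this point and does not need to go back through Lemma~\ref{mod p basic form 1}. Note finally that your approach does not use $\alpha\cdot\alpha<0$, whereas that hypothesis (via $\hat\alpha\cdot\hat\alpha<0$) is part of the input to \cite[Theorem~1.3]{OS1} in the paper's proof.
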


\begin{proof}
  Since $\mathfrak{s}$ is a basic class, we can consider $\widehat{X}$ and $\hat{\mathfrak{s}}$. We may assume that $\Sigma$ is also embedded into $\widehat{X}$. Let
  \[
    \xi(\Sigma)=(U-x_1y_1)\cdots(U-x_gy_g)\in\mathbb{A}(\widehat{X}),
  \]
  where $\{x_1,y_1,\ldots,x_g,y_g\}$ is the image of the standard symplectic basis of $H_1(\Sigma)$. Then by assumption and $d(\hat{\mathfrak{s}})=0$, we can apply \cite[Theorem 1.3]{OS1} to obtain
  \[
    SW_{\hat{\mathfrak{s}}+\mathrm{PD}(\hat{\alpha})}(\xi(\Sigma)U^m)=SW_{\hat{\mathfrak{s}}}(1),
  \]
  where $2m=\langle c_1(\mathfrak{s}),\alpha\rangle+\alpha\cdot\alpha-2g\ge 0$. Since $b_1(\widehat{X})=0$, $i_*(\xi(\Sigma))U^m$ coincides with $U^{g+m}$ modulo torsion elements. Since $SW_{\hat{\mathfrak{s}}+\mathrm{PD}(\hat{\alpha})}\colon\mathbb{A}(\widehat{X})\to\Z$ is linear, it annihilates torsion elements, so that $SW_{\hat{\mathfrak{s}}+\mathrm{PD}(\hat{\alpha})}(\xi(\Sigma)U^m)=SW_{\hat{\mathfrak{s}}+\mathrm{PD}(\hat{\alpha})}(U^{g+m})$. Thus by \eqref{SW(U)},
  \[
    SW(\hat{\mathfrak{s}}+\mathrm{PD}(\hat{\alpha}))=SW_{\hat{\mathfrak{s}}+\mathrm{PD}(\hat{\alpha})}(U^{m+g})=SW_{\hat{\mathfrak{s}}}(1)=SW(\hat{\mathfrak{s}})=SW(\mathfrak{s}),
  \]
  implying $\hat{\mathfrak{s}}+\mathrm{PD}(\hat{\alpha})$ is a mod $p$ basic class, as desired.
\end{proof}

We are ready to prove Theorem \ref{adjunction}.

\begin{proof}
  [Proof of Theorem \ref{adjunction}]
  (1) By Corollary~\ref{main} and the assumption, we have $2d(\mathfrak{s})\leq 2(2p-4)\leq 2g-2$. It thus suffices to prove the case $|\langle c_1(\mathfrak{s}),\alpha\rangle|+\alpha\cdot\alpha\ge 0$. By reversing the orientation of the embedded surface if necessary, we may assume $\langle c_1(\mathfrak{s}),\alpha\rangle\le 0$, so that $-\langle c_1(\mathfrak{s}),\alpha\rangle+\alpha\cdot\alpha\ge 0$. We also have $g\ge 2p-3\ge 1$. Assume that $-\langle c_1(\mathfrak{s}),\alpha\rangle+\alpha\cdot\alpha+2d(\mathfrak{s})\ge 2g-1$. Then since $-\langle c_1(\mathfrak{s}),\alpha\rangle+\alpha\cdot\alpha+2d(\mathfrak{s})$ is even, $-\langle c_1(\mathfrak{s}),\alpha\rangle+\alpha\cdot\alpha+2d(\mathfrak{s})\ge 2g$. So by Lemma \ref{mod p basic form 1}, $\mathfrak{s}-\mathrm{PD}(\alpha)$ is a mod $p$ basic class. Moreover,
  \begin{align*}
    2d(\mathfrak{s}-\mathrm{PD}(\alpha))&=2(-\langle c_1(\mathfrak{s}),\alpha\rangle+\alpha\cdot\alpha+d(\mathfrak{s}))\\
    &\ge -\langle c_1(\mathfrak{s}),\alpha\rangle+\alpha\cdot\alpha+2d(\mathfrak{s})\\
    &\ge 2g\\
    &\ge 2(2p-3).
  \end{align*}
  Then we obtain a contradiction to Corollary \ref{main}. Therefore we must have $-\langle c_1(\mathfrak{s}),\alpha\rangle+\alpha\cdot\alpha+2d(\mathfrak{s})\le 2g-2$.

  \noindent(2) Assume that $\langle c_1(\mathfrak{s}),\alpha\rangle+\alpha\cdot\alpha+d(\mathfrak{s})\ge 2g$. Then by Lemma \ref{mod p basic form 2}, $\hat{\mathfrak{s}}+\mathrm{PD}(\hat{\alpha})$ is a mod $p$ basic class. Moreover, by \eqref{formula d},
  \begin{align*}
    d(\hat{\mathfrak{s}}+\mathrm{PD}(\hat{\alpha}))&=\langle c_1(\hat{\mathfrak{s}}),\hat{\alpha}\rangle+\hat{\alpha}\cdot\hat{\alpha}+d(\hat{\mathfrak{s}})\\
    &=(\langle c_1(\mathfrak{s}),\alpha\rangle+3n)+(\alpha\cdot\alpha-n)+0\\
    &=\langle c_1(\mathfrak{s}),\alpha\rangle+\alpha\cdot\alpha+d(\mathfrak{s})\\
    &\ge 2g\\
    &\ge 2p-2,
  \end{align*}
  where $d(\mathfrak{s})=2n$. So we obtain a contradiction to Corollary \ref{main}, and hence $\langle c_1(\mathfrak{s}),\alpha\rangle+\alpha\cdot\alpha+d(\mathfrak{s})\le 2g-1$. Since $\langle c_1(\mathfrak{s}),\alpha\rangle+\alpha\cdot\alpha+d(\mathfrak{s})$ is even, we must have $\langle c_1(\mathfrak{s}),\alpha\rangle+\alpha\cdot\alpha+d(\mathfrak{s})\le 2g-2$. By reversing the orientation of the embedded surface if necessary, we may assume $\langle c_1(\mathfrak{s}),\alpha\rangle\ge 0$. Thus the proof is complete.
\end{proof}

\section{Cohomotopy computation}
\label{Cohomotopy computation 2}

This section proves Theorem \ref{cohomotopy CP'}. Since we are concerned with stable cohomotopy groups, all spaces and maps will be stabilized. We will also localize all spaces and maps at an odd prime $p$, unless otherwise is specified. 
We refer to \cite{HMR} for $p$-localization.

\subsection{Reduction}

We reduce the computation of this map to simpler spaces. The following ($p$-locally stable) splitting was proved in \cite[Theorem 9.3 and Corollary 9.5]{MNT}.

\begin{lemma}
  \label{decomposition}
  There is a homotopy equivalence
  \[
    \C P^n\simeq X_1^r\vee\cdots\vee X_{p-1}^r
  \]
  such that $X_i^r=S^{2i}\cup e^{2i+2(p-1)}\cup\cdots\cup e^{2i+2r(p-1)}$, where $r=\left[\frac{n-i}{p-1}\right]$.
\end{lemma}

%

We can easily see from the proof of \cite[Theorem 9.3]{MNT} that the splitting of Lemma \ref{decomposition} is natural with respect to $n$ in the sense that $X_i^r$ for $\C P^n$ is a subcomplex of $X_i^r$ for $\C P^{n+1}$, where $i=1,\ldots,p-1$. Then for $s\le r$, we can consider the quotient
\[
  X_i^r/X_i^s=S^{2i+2(s+1)(p-1)}\cup e^{2i+2(s+2)(p-1)}\cup\cdots\cup e^{2i+2r(p-1)}.
\]

\begin{lemma}
  \label{torsion}
  If $i\not\equiv k\mod p-1$, then $\pi^{2k}(X_i^r)$ is a finite abelian group.
\end{lemma}

\begin{proof}
  The cofiber sequence $X^{r-1}_i\to X^r_i \to X^r_i/X^{r-1}_i=S^{2i+2r(p-1)}$ induces a long exact sequence of cohomotopy groups
  \[
    \cdots\to\pi^{2k}(S^{2i+2r(p-1)})\to \pi^{2k}(X^r_i)\to \pi^{2k}(X^{r-1}_i)\to\cdots.
  \]
  Then since $\pi^*(S^0)$ is a finite abelian group for $*>0$, the statement is proved by induction on $r$.
\end{proof}

Hereafter, we set $k=i+s(p-1)$ for given integers $1\le i\le p-1$ and $s\ge 0$. Let $Y_k^t=X_i^{s+t}/X_i^{s-1}$. Then we have
\[
  Y_k^t=S^{2k}\cup e^{2k+2(p-1)}\cup\cdots\cup e^{2k+2t(p-1)}.
\]

\begin{lemma}
  \label{X-Y}
The natural map $\pi^{2k}(Y_k^t)\to\pi^{2k}(X_i^{s+t})$ is an isomorphism. Moreover, there is an isomorphism
\[
  \pi^{2k}(Y_k^t)/\mathrm{Tor}\cong \Z_{(p)}.
\]
\end{lemma}

\begin{proof}
Since the dimension of $X_i^{s-1}$ is smaller than $2k$, we have $\pi^{2k}(X_i^{s-1})=0$. Then by the long exact sequence
\[
  \cdots\to\pi^*(Y^t_k)\to\pi^*(X_i^{s+t})\to\pi^*(X_i^{s-1})\to\cdots
\]
we obtain the first isomorphism. Consider the long exact sequence
\[
  \cdots\to\pi^*(Y_{k+(p-1)}^t)\to\pi^*(Y_k^t)\to\pi^*(S^{2k})\to\cdots.
\]
Then since $Y_{k+p-1}^t$ is $(2(k+p-1)-1)$-connected and $\pi^{2k}(S^{2k})\cong\Z_{(p)}$,  we obtain the second isomorphism.
\end{proof}

Now we are ready to prove:

\begin{proposition}
  \label{reduction}
  There is a commutative diagram
  \[
    \xymatrix{
      \pi^{2k}(\C P^n)/\mathrm{Tor}\ar[r]\ar[d]_\cong&\pi^{2k}(\C P^{n-p+1})/\mathrm{Tor}\ar[d]^\cong\\
      \pi^{2k}(Y_k^r)/\mathrm{Tor}\ar[r]&\pi^{2k}(Y_k^{r-1})/\mathrm{Tor}
    }
  \]
  where the horizontal maps are induced from inclusions and $r=\left[\frac{n-i}{p-1}\right]$.
\end{proposition}

\begin{proof}
  By Lemma \ref{decomposition}, there is a commutative diagram
  \[
    \xymatrix{
      \pi^{2k}(\C P^n)\ar[r]\ar[d]_\cong&\pi^{2k}(\C P^{n-p+1})\ar[d]^\cong\\
      \bigoplus_{i=1}^{p-1}\pi^{2k}(X_i^r)\ar[r]&\bigoplus_{i=1}^{p-1}\pi^{2k}(X_i^{r-1})
    }
  \]
  where the bottom map is the sum of the maps induced from the inclusions $X_i^{r-1}\to X_i^r$. Then the statement follows from Lemmas \ref{torsion} and \ref{X-Y}.
\end{proof}

By Proposition \ref{reduction}, we need to compute the map
\[
  j_r^*\colon\pi^{2k}(Y_k^r)/\mathrm{Tor}\to\pi^{2k}(Y_k^{r-1})/\mathrm{Tor}
\]
where $j_r\colon Y_k^{r-1}\to Y_k^r$ is the inclusion. To this end, we describe the attaching maps of cells of $Y_k^t$. To this end, we recall the ($p$-local stable) homotopy groups of $S^0$ in a range. Let $C\{x\}$ denotes an abelian group having a generator $x$ which is isomorphic with a cyclic group $C$.

\begin{theorem}[{\cite[Theorem 4.14]{T1}}]
  \label{homotopy group 2}
  For $i<p(p-1)$, we have
  \[
    \pi_{2i(p-1)-1}(S^0)\cong
    \begin{cases}
      \Z/p\{\alpha_i\}&i\not\equiv 0\mod p\\
      \Z/p^2\{\alpha_i'\}&i\equiv 0\mod p.
    \end{cases}
  \]
\end{theorem}

The following two lemmas describe the cell structure of $Y_k^r$.

\begin{lemma}
  \label{rank 1}   
If $k\equiv i-s\not\equiv 0\mod p$, then there is a homotopy equivalence
  \[
Y_k^1 =   X_i^{s+1}/X_i^{s-1}\simeq S^{2k}\cup_{\alpha_1}e^{2k+2(p-1)}.
  \]
\end{lemma}

\begin{proof}
By definition, there is a homotopy equivalence
  \[
    Y^1_k=X_i^{s+1}/X_i^{s-1}\simeq S^{2k}\cup_\phi e^{2k+2(p-1)}
  \]
  If the attaching map $\phi$ is non-trivial, then by Theorem \ref{homotopy group 2}, we can take $\phi=\alpha_1$. If $\phi$ is trivial, then the Steenrod operation $\mathcal{P}^1$ acts trivially on the mod $p$ cohomology of $Y^1_k=X_i^{s+1}/X_i^{s-1}$ because it is a wedge of spheres. Then it is sufficient to show that $\mathcal{P}^1$ acts non-trivially on the mod $p$ cohomology of $Y^1_k=X_i^{s+1}/X_i^{s-1}$. 
  Let $X=\C P^{k+(p-1)}/\C P^{k-(p-1)}$. Since $k\not\equiv 0\mod p$, we have
  \[
    \mathcal{P}^1(H^{2k}(X;\Z/p))=H^{2k+2(p-1)}(X;\Z/p).
  \]
  By Lemma \ref{decomposition}, the inclusion $Y^2_k=X_i^{s+1}/X_i^{s-1}\to X$ induces an isomorphism in the mod $p$ cohomology of dimension $2k,\,2k+2(p-1)$, implying that $\mathcal{P}^1$ acts non-trivially on the mod $p$ cohomology of $Y^2_k=X_i^{s+1}/X_i^{s-1}$. Thus the statement is proved.
\end{proof}

\begin{lemma}
  \label{rank 4}
  If $k=i+s(p-1)=ap+1$, then $Y^3_k=X_i^{s+3}/X_i^{s-1}$ is homotopy equivalent to
  \[
    S^{2k}\cup_{-\alpha_1}e^{{2k+2(p-1)}}\cup_{\left(\frac{a}{2}+1\right)\alpha_2}e^{{2k+4(p-1)}}\cup_{-\frac{a+1}{2}\alpha_2+\alpha_1}e^{{2k+6(p-1)}}.
  \]
\end{lemma}

\begin{proof}
  The lemma follows from \cite[Proposition 2.3]{D} and its proof.
\end{proof}

\subsection{Toda bracket}

Since our basic computation tool is the Toda bracket, we briefly recall its definition, where we refer to \cite{T1} for details. Suppose we are given maps
\[
  \gamma\colon W\to X,\quad\beta\colon X\to Y,\quad\alpha\colon Y\to Z
\]
satisfying $\beta\circ\gamma=0$ and $\alpha\circ\beta=0$. Let $h\colon CX\to Z$ be a null-homotopy for $\alpha\circ\beta$. Then we get a map
\[
  \bar{\alpha}=\alpha\cup h\colon Y\cup_\beta CX\to Z
\]
which is called an extension of $\alpha$ by $\beta$. Clearly, an extension of $\alpha$ by $\beta$ depends on the choice of a null-homotopy for $\alpha\circ\beta=0$. Let $\mathrm{Ext}(\alpha,\beta)$ denote the set of all extensions of $\alpha$ by $\beta$. Define a map $\tilde{\gamma}\colon\Sigma W\to Y\cup_\beta CX$ by
\[
  \tilde{\gamma}(y,t)=
  \begin{cases}
    g(y,1-2t)&0\le t\le\frac{1}{2}\\
    (\gamma(y),2t-1)&\frac{1}{2}\le t\le 1
  \end{cases}
\]
where $g\colon CW\to Y$ is a null-homotopy for $\beta\circ\gamma=0$. We call a map $\tilde{\gamma}$ a coextension of $\gamma$ by $\beta$. A coextension of $\gamma$ by $\beta$ depends on the choice of a null-homotopy for $\gamma\circ\beta=0$ as well as an extension above, and $\mathrm{Coext}(\beta,\gamma)$ denote the set of all coextensions of $\gamma$ by $\beta$.

\begin{definition}
  The Toda bracket of the above $\alpha,\beta,\gamma$ is defined as the set
  \[
    \mathrm{Ext}(\alpha,\beta)\circ\mathrm{Coext}(\beta,\gamma)\subset[\Sigma W,Z]
  \]
  which we denote by $\langle\alpha,\beta,\gamma\rangle$.
\end{definition}

We write $\langle\alpha,\beta,\gamma\rangle=\delta$ if the Toda bracket $\langle\alpha,\beta,\gamma\rangle$ consists of a single element $\delta$. As in \cite[Lemma 1.1]{T2}, the Toda bracket $\langle\alpha,\beta,\gamma\rangle$ is a coset of the subgroup
\[
  (\Sigma\gamma)^*([\Sigma X,Z])+\alpha_*([\Sigma W,Y])\subset[\Sigma W,Z]
\]
which is called the indeterminacy of $\langle\alpha,\beta,\gamma\rangle$. We prove a key lemma in our computation.

\begin{lemma}
  \label{Toda bracket}
  Suppose that maps $\beta\colon X\to Y$ and $\alpha\colon Y\to Z$ satisfy $k(\alpha\circ\beta)=0$ for some integer $k$. Then for any map $\gamma\colon\Sigma W\to Y\cup_\beta CX$ and any extension $\widetilde{k\alpha}\colon Y\cup_\beta CX\to Z$ of $k\alpha$ by $\beta$, we have
  \[
    \widetilde{k\alpha}\circ\gamma\in\langle k,\alpha\circ\beta,\Sigma^{-1}(\rho\circ\gamma)\rangle
  \]
  where $\rho\colon Y\cup_\beta CX\to\Sigma X$ denotes the pinch map.
\end{lemma}

\begin{proof}
  Since there is a homotopy cofibration $Y\cup_\beta CX\xrightarrow{\rho}\Sigma X\xrightarrow{\Sigma\beta}\Sigma Y$, we have $\beta\circ\Sigma^{-1}\rho=0$, implying $\beta\circ\Sigma^{-1}(\rho\circ\gamma)=0$. 
 Since we are stabilizing, $\gamma$ is a coextension of $\Sigma^{-1}(\rho\circ\gamma)$ by \cite{O}.
  Then since $k(\alpha\circ\beta)=0$, the Toda bracket $\langle k\alpha,\beta,\Sigma^{-1}(\rho\circ\gamma)\rangle$ is defined, and by definition, $\widetilde{k\alpha}\circ\gamma$ belongs to this Toda bracket. On the other hand, by \cite[Proposition 1.2]{T2}, we have
  \[
    \langle k\alpha,\beta,\Sigma^{-1}(\rho\circ\gamma)\rangle\subset\langle k,\alpha\circ\beta,\Sigma^{-1}(\rho\circ\gamma)\rangle.
  \]
  Thus the statement is proved.
\end{proof}

\subsection{Computation}

As in \cite{T1}, if we choose $\alpha_1\in\pi_{2p-3}(S^0)$, then $\alpha_i\in\pi_{2i(p-1)-1}(S^0)$ for $i>1$ are inductively defined by
\[
  \alpha_i=\langle\alpha_{i-1},p,\alpha_1\rangle.
\]
The element $\alpha_1$ is defined as a generator of $\pi_{2p-3}(S^0)=\Z/p$ with mod $p$ Hopf invariant $1$ (\cite[p.~309]{T1}).
If $i\equiv 0$ mod $p$, then $\pi_{2i(p-1)-1}(S^0)=\Z/{p^2}$ and $\alpha^\prime_i=\alpha_i/p$ is a generator.

We will use the following alternative description of $\alpha_i$

\begin{proposition}[{\cite[Proposition 4.17]{T1}}]
  \label{relation}
  Let $\alpha_{jp}=p\alpha'_{jp}$. If $s+t<p(p-1)$, then
  \[
    \langle p,\alpha_s,\alpha_t\rangle=
    \begin{cases}
      \frac{t}{s+t}\alpha_{s+t}&s+t\not\equiv 0\mod p\\
      \frac{pt}{s+t}\alpha_{s+t}'&s+t\equiv 0\mod p,
    \end{cases}
  \]
\end{proposition}

\begin{remark}
Since $\alpha_t\circ\alpha_s=0$ if $s+t<p(p-1)$ (\cite[Proposition 4.17]{T1}), the Toda bracket $\langle p,\alpha_s,\alpha_t\rangle$ in Proposition \ref{relation} is well-defined.
The statement of the theorem means that $\langle p,\alpha_s,\alpha_t\rangle$ has only one element given in the right hand side.
\end{remark}

Hereafter, we assume $t<p(p-1)$. Let $\varphi_t\colon S^{2k+2t(p-1)-1}\to Y_k^{t-1}$ denote the attaching map of the top cell of $Y_k^t=Y_k^{t-1}\cup e^{2k+2t(p-1)}$. We say that a map $\theta_{t-1}\colon Y_k^{t-1}\to S^{2k}$ detects $\varphi_t$, if the restriction of $\theta_{t-1}$ to the bottom cell $S^{2k}\subset Y_k^{t-1}$ is non-trivial and $\theta_{t-1}\circ\varphi_t$ generates $\pi^{2k}(S^{2k+2t(p-1)-1})=\pi_{2k+2t(p-1)-1}(S^{2k})$, where $\pi_{2k+2t(p-1)-1}(S^{2k})$ is given by Theorem \ref{homotopy group 2}. We define
\[
  q_t=
  \begin{cases}
    p&t\not\equiv 0\mod p\\
    p^2&t\equiv 0\mod p.
  \end{cases}
\]

\begin{lemma}
  \label{q_t}
  If a map $\theta_{t-1}\colon Y_k^{t-1}\to S^{2k}$ detects $\varphi_t$, then the map $j_t^*\colon\pi^{2k}(Y_k^t)/\mathrm{Tor}\to\pi^{2k}(Y_k^{t-1})/\mathrm{Tor}$ is identified with the map
  \[
    q_t\colon\Z_{(p)}\to\Z_{(p)}.
  \]
\end{lemma}

\begin{proof}
  Consider the exact sequence
  \[
    \cdots\to\pi^{2k}(Y_k^t)\to\pi^{2k}(Y_k^{t-1})\xrightarrow{\varphi_t^*}\pi^{2k}(S^{2k+2t(p-1)-1})\to\cdots
  \]
  induced from the cofibration sequence $S^{2k+2t(p-1)-1}\xrightarrow{\varphi_t}Y_k^{t-1}\to Y_k^t$. By Lemma \ref{X-Y}, we have $\pi^{2k}(Y_k^t)/\mathrm{Tor}\cong\pi^{2k}(Y_k^{t-1})/\mathrm{Tor}\cong\Z_{(p)}$, and by Theorem \ref{homotopy group 2}, we also have $\pi^{2k}(S^{2k+2t(p-1)-1})\cong\Z/q_t$. Then it is sufficient to show that there is an element $\phi\in\pi^{2k}(Y_k^{t-1})$ of infinite order such that $\varphi_t^*(\phi)$ generates $\pi^{2k}(S^{2k+2t(p-1)-1})$. Since $\theta_{t-1}\vert_{S^{2k}}\ne 0$ and $\pi^{2k}(S^{2k})\cong\Z_{(p)}$, $\theta_{t-1}$ is of infinite order. 
  Moreover, $\varphi_t^*(\theta_{t-1})$ generates $\pi^{2k}(S^{2k+2t(p-1)-1})$, because $\theta_{t-1}$ detects $\varphi_t$. This 
  completes the proof.
\end{proof}

\begin{lemma}
  \label{composition rank 1}
  If $k-t+1\not\equiv 0\mod p$ with $t\ge 2$ and there is a map $\theta_{t-2}\colon Y_k^{t-2}\to S^{2k}$ detecting $\varphi_{t-1}$, then there is a map $\theta_{t-1}\colon Y_k^{t-1}\to S^{2k}$ detecting $\varphi_{t}$.
\end{lemma}

\begin{proof}
  By Theorem \ref{homotopy group 2}, we may assume
  \[
    \theta_{t-2}\circ\varphi_{t-1}=
    \begin{cases}
      \alpha_{t-1}&t-1\not\equiv 0\mod p\\
      \alpha'_{t-1}&t-1\equiv 0\mod p.
    \end{cases}
  \]
  We define $\phi=\frac{q_{t-1}}{p}\circ\theta_{t-2}$. Then we have $p\circ\phi\circ\varphi_{t-1}=q_{t-1}\circ\theta_{t-2}\circ\varphi_{t-1}=q_{t-1}(\theta_{t-2}\circ\varphi_{t-1})=0$ 
  because we are stabilizing. 
  Hence we can set $\theta_{t-1}\colon Y_k^{t-1}=Y_k^{t-2}\cup_{\varphi_{t-1}}e^{2k+2(t-1)(p-1)} \to S^{2k}$ 
  to be an extension of $p\circ\phi: Y^{t-2}_k \to S^{2k}$ by $\varphi_{t-1}$. 
We apply  Lemma \ref{Toda bracket} for $\alpha=\phi$, $\beta=\varphi_{t-1}$ and $\gamma=\varphi_t$.
Then   the composite $\theta_{t-1}\circ\varphi_t$ belongs to 
   the Toda bracket $\langle p,\phi\circ\varphi_{t-1},\Sigma^{-1}(\rho\circ\varphi_t)\rangle$,
   where $\rho\colon Y^{t-1}_k\to Y^{t-1}_k/Y^{t-2}_k = S^{2k+2(t-1)(p-1)}$ is the pinch map onto the top cell.
Note that $Y^{t}_k/Y^{t-2}_k= Y^1_{k+(t-1)(p-1)} = S^{2k+2(t-1)(p-1)}\cup_{\rho\circ\varphi_t} e^{2k+2t(p-1)}$.
Since $k+(t-1)(p-1)\not\equiv 0$ mod $p$ by the assumption, Lemma \ref{rank 1} implies that $\rho\circ\varphi_t=\alpha_1$.
We may assume $\Sigma^{-1}(\rho\circ\varphi_t) = \alpha_1$ because we are stabilizing.
   We also have
  \[
    \phi\circ\varphi_{t-1}=\frac{q_{t-1}}{p}\circ\theta_{t-2}\circ\varphi_{t-1}=\frac{q_{t-1}}{p}(\theta_{t-2}\circ\varphi_{t-1})=\alpha_{t-1}
  \]
  because we are stabilizing, where $p\alpha'_{t-1}=\alpha_{t-1}$ for $t-1\equiv 0\mod p$. 
  Then the composite $\theta_{t-1}\circ\varphi_t$ belongs to the Toda bracket 
  $\langle p,\alpha_{t-1},\alpha_1\rangle$ which is a subset of $\pi^{2k}(S^{2k+2t(p-1)-1})$, 
  where we have $q_t\circ\theta_{t-2}=p\circ(p\circ\theta_{t-2})$ 
  and $(p\circ\theta_{t-2})\circ\varphi_{t-1}=p\circ\alpha_t'=p\alpha'_t=\alpha_t$ 
  for $t-1\equiv 0\mod p$. On the other hand, it follows from Proposition \ref{relation} that
  \[
    \langle p,\alpha_{t-1},\alpha_1\rangle=
    \begin{cases}
      \frac{1}{t}\alpha_t&t\not\equiv 0\mod p\\
      \frac{p}{t}\alpha'_t&t\equiv 0\mod p
    \end{cases}
  \]
  where $\frac{p}{t}$ for $t\equiv 0\mod p$ makes sense because we are assuming $t<p(p-1)$. Then $\theta_{t-1}\circ\varphi_t$ generates $\pi^{2k}(S^{2k+2t(p-1)-1})$. By definition, we have 
  $\theta_{t-1}\vert_{S^{2k}}=q_t(\theta_{t-2}\vert_{S^{2k}})$. Since $\theta_{t-2}\vert_{S^{2k}}\ne 0$ 
  and $\pi^{2k}(S^{2k})\cong\Z_{(p)}$, $\theta_{t-2}\vert_{S^{2k}}$ is of infinite order, implying 
  $\theta_{t-1}\vert_{S^{2k}}\ne 0$. Thus the proof is finished.
\end{proof}

\begin{lemma}
  \label{detection}
  Let $t\ge 4$ and $k-t+2\equiv 0\mod p$. If there is a map $\theta_{t-4}\colon Y_k^{t-4}\to S^{2k}$ detecting $\varphi_{t-3}$, then there is a map $\theta_{t-1}\colon Y_k^{t-1}\to S^{2k}$ such that $\theta_{t-1}\vert_{S^{2k}}$ is non-trivial and
  \[
    \theta_{t-1}\circ\varphi_t=
    \begin{cases}
      -\frac{p(3a(k,t)+5)}{2t}\alpha_t'&t\equiv 0\mod p\\
      \frac{p(a(k,t)+2)}{t-1}\alpha_t&t\equiv 1\mod p\\
      -\frac{p(a(k,t)+2)}{4(t-2)}\alpha_t&t\equiv 2\mod p\\
      \frac{3a(k,t)+4}{6}\alpha_t&t\equiv 3\mod p\text{ and }p>3\\
      \frac{1}{t}\left(\frac{a(k,t)+1}{t-2}+\frac{a(k,t)+2}{t-1}\right)\alpha_t&t\equiv 4,5,\ldots,p-1\mod p
    \end{cases}
  \]
  where $a(k,t)$ is as in \eqref{a(k,t)}.
\end{lemma}

\begin{proof}
  By Theorem \ref{homotopy group 2}, we may assume
  \begin{equation}\label{eq:t-4}
    \theta_{t-4}\circ\varphi_{t-3}=
    \begin{cases}
      \alpha'_{t-3}&t\equiv 3\mod p\\
      \alpha_{t-3}&t\not\equiv 3\mod p.
    \end{cases}
  \end{equation}
  Let us abbreviate $a(k,t)$ by $a$. 
  We can apply Lemma~\ref{rank 1} and Lemma \ref{rank 4} because $k+(t-3)(p-1)=ap+1$ and therefore $k+(t-4)(p-2)\not\equiv 0$ mod $p$. Then 
   there is a homotopy equivalence
  \begin{multline}
    \label{Y_k}
    Y_k^t\simeq Y_k^{t-4}\cup_{\alpha_1}e^{2k+2(t-3)(p-1)}\cup_{-\alpha_1}e^{2k+2(t-2)(p-1)}\\
    \cup_{\left(\frac{a}{2}+1\right)\alpha_2}e^{2k+2(t-1)(p-1)}\cup_{-\frac{a+1}{2}\alpha_2+\alpha_1}e^{2k+2t(p-1)}.
  \end{multline}
  Let $\theta_{t-3}\colon Y_k^{t-3}\to S^{2k}$ be an extension of $p\circ(\frac{q_t}{p}\theta_{t-4})$ by $\varphi_{t-4}$.

  \noindent(1) The $t\equiv 0\mod p$ case.

  As in the proof of Lemma \ref{composition rank 1}, we can see that $\theta_{t-3}\vert_{S^{2k}}\ne 0$.
  We apply Lemma \ref{Toda bracket} to the following setting
  \begin{gather*}
 \alpha=\frac{q_t}{p}\theta_{t-4}\colon Y^{t-4}_k\to S^{2k},\\
 \beta=\varphi_{t-3}=\alpha_1\colon S^{2k+2(t-3)(p-1)-1}\to Y^{t-4}_k,\\ 
\begin{aligned} \gamma=\varphi_{t-2}+\varphi_{t-1}=-\alpha_1\vee \left(\frac{a}{2}+1\right)\alpha_2\colon S^{2k+2(t-2)(p-1)-1}\vee S^{2k+2(t-1)(p-1)-1}\\ \to Y^{k-4}\cup_{\alpha_1}e^{2k+2(t-3)(p-1)}.\end{aligned}\\
  \end{gather*} 
  Then 
  \begin{align*}
    \theta_{t-3}\circ(\varphi_{t-2}+\varphi_{t-1})&=-\langle p ,\alpha_{t-3}, \alpha_1 \rangle+\left(\frac{a}{2}+1\right)\langle p ,\alpha_{t-3}, \alpha_2 \rangle\\
    &=-\frac{1}{t-2}\alpha_{t-2}+\frac{a+2}{t-1}\alpha_{t-1}
  \end{align*}
  by Theorem \ref{relation}, \eqref{eq:t-4} and \eqref{Y_k}. Now we let $\theta_{t-1}\colon Y_k^{t-1}\to S^{2k}$ be an extension of $p\circ\theta_{t-3}$ by $\varphi_t$. Then as in the proof of Lemma \ref{composition rank 1}, we obtain that $\theta_{t-1}\vert_{S^{2k}}\neq 0$.
  We apply Lemma \ref{Toda bracket} to the following setting
  \begin{gather*}
 \alpha=\theta_{t-3}\colon Y^{t-3}_k\to S^{2k},\\
  \beta=\varphi_{t-2}+\varphi_{t-1}=-\alpha_1\vee \left(\frac{a}{2}+1\right)\alpha_2\colon S^{2k+2(t-2)(p-1)-1}\vee S^{2k+2(t-1)(p-1)-1}\to Y^{k-3}.\\
 \gamma = \varphi_t\colon S^{2k+2t(p-1)-1}\to Y^{t-3}\cup_{\varphi_{t-2}} e^{2k+2(t-2)(p-1)} \cup_{\varphi_{t-1}} e^{2k+2(t-1)(p-1)}
  \end{gather*} 
 Then
  \begin{align*}
    \theta_{t-1}\circ\varphi_t&=\frac{a+1}{2(t-2)}\langle p ,\alpha_{t-2}, \alpha_2 \rangle+\frac{a+2}{t-1}\langle p ,\alpha_{t-1}, \alpha_1 \rangle\\
    &=\frac{p}{t}\left(\frac{a+1}{t-2}+\frac{a+2}{t-1}\right)\alpha_t'
    =-\frac{p(3a+5)}{2t}\alpha_t'
  \end{align*}
  by Lemma \ref{Toda bracket} and Theorem \ref{relation} together with \eqref{Y_k}.

  \noindent(2) The $t\equiv 1\mod p$ case.

  By Lemma \ref{Toda bracket} and \eqref{Y_k},  we have
  \[
    \theta_{t-3}\circ(\varphi_{t-2}+\varphi_{t-1})=\alpha_{t-2}+\frac{p(a+2)}{t-1}\alpha'_{t-1}.
  \]
  Then as in the case (1), there is a map $\theta_{t-1}\colon Y_k^{t-1}\to S^{2k}$ such that $\theta_{t-1}\vert_{S^{2k}}\ne 0$ and
  \begin{align*}
    \theta_{t-1}\circ\varphi_t&=-\frac{a+1}{2}\langle p^2,\alpha_{t-2}, \alpha_2 \rangle+\frac{p(a+2)}{t-1}\langle p^2,\alpha'_{t-1}, \alpha_1 \rangle\\
    &=\frac{p(a+2)}{t-1}\langle p ,\alpha_{t-1}, \alpha_1 \rangle
    =\frac{p(a+2)}{t-1}\alpha_t
  \end{align*}
  by Lemma \ref{Toda bracket}, Theorem \ref{relation} and \eqref{Y_k}.

  \noindent(3) The $t\equiv 2\mod p$ case.

  By Lemma \ref{Toda bracket} and \eqref{Y_k}, we have
  \[
    \theta_{t-3}\circ(\varphi_{t-2}+\varphi_{t-1})=-\frac{p}{t-2}\alpha'_{t-2}+(a+2)\alpha_{t-1}.
  \]
  Then as in the case (1), there is a map $\theta_{t-1}\colon Y_k^{t-1}\to S^{2k}$ such that $\theta_{t-1}\vert_{S^{2k}}\ne 0$ and
  \begin{align*}
    \theta_{t-1}\circ\varphi_t&=\frac{p(a+1)}{2(t-2)}\langle p^2,\alpha_{t-2}',
    \alpha_2 \rangle+(a+2)\langle p^2 ,\alpha_{t-1}, \alpha_1 \rangle\\
    &=\frac{p(a+1)}{2(t-2)}\langle p ,\alpha_{t-2}, \alpha_2 \rangle
    =\frac{p(a+1)}{2(t-2)}\alpha_t
  \end{align*}
  by Lemma \ref{Toda bracket}, Theorem \ref{relation} and \eqref{Y_k}.

  \noindent(4) The $t\equiv 3\mod p$ with $p>3$ case.

  By Lemma \ref{Toda bracket} and \eqref{Y_k}, we have
  \[
    \theta_{t-3}\circ(\varphi_{t-2}+\varphi_{t-1})=-\alpha_{t-2}+\frac{a+2}{2}\alpha_{t-1}.
  \]
  Then as in the case (1), there is a map $\theta_{t-1}\colon Y_k^{t-1}\to S^{2k}$ such that $\theta_{t-1}\vert_{S^{2k}}\ne 0$ and
  \[
    \theta_{t-1}\circ\varphi_t=\frac{a+1}{2}\langle p,\alpha_{t-2}, \alpha_2 \rangle+\frac{a+2}{2}\langle p,\alpha_{t-1}, \alpha_1 \rangle=\frac{3a+4}{6}\alpha_t
  \]
  by Lemma \ref{Toda bracket}, Theorem \ref{relation} and \eqref{Y_k}.

  \noindent(5) The $t\not\equiv 0,1,2,3\mod p$ case.

  By Lemma \ref{Toda bracket} and \eqref{Y_k}, we have
  \[
    \varphi_{t-3}\circ(\varphi_{t-2}+\varphi_{t-1})=-\frac{1}{t-2}\alpha_{t-2}+\frac{a+2}{t-1}\alpha_{t-1}.
  \]
  Then as in the case (1), there is a map $\theta_{t-1}\colon Y_k^{t-1}\to S^{2k}$ such that $\theta_{t-1}\vert_{S^{2k}}\ne 0$ and
  \[
    \theta_{t-1}\circ\varphi_t=\frac{a+1}{2(t-2)}\langle p
    ,\alpha_{t-2}, \alpha_2 \rangle+\frac{a+2}{t-1}\langle p ,\alpha_{t-1}, \alpha_1 \rangle
    =\frac{1}{t}\left(\frac{a+1}{t-2}+\frac{a+2}{t-1}\right)\alpha_t
  \]
  by Lemma \ref{Toda bracket}, Theorem \ref{relation} and \eqref{Y_k}. Thus the proof is complete.
\end{proof}

\begin{lemma}
  \label{r=3}
  If $k\equiv 1\mod p$ and $3a(k,3)+4\not\equiv 0\mod p$, then there is a map $\theta_2\colon Y_k^2\to S^{2k}$ detecting $\varphi_3$.
\end{lemma}

\begin{proof}
By Lemmas \ref{rank 1} and \ref{rank 4}, we have
  \[
    Y_k^3\simeq S^{2k}\cup_{-\alpha_1}e^{2k+2(p-1)}\cup_{\left(\frac{a}{2}+1\right)\alpha_2}e^{2k+4(p-1)}\cup_{-\frac{a+1}{2}\alpha_2\vee\alpha_1}e^{2k+6(p-1)}
  \]
  where $a=a(k,t)$. Then we can define a map $\theta_2\colon Y_k^2\simeq S^{2k}\cup_{-\alpha_1}e^{2k+2(p-1)}\cup_{\left(\frac{a}{2}+1\right)\alpha_2}e^{2k+4(p-1)}\to S^{2k}$ as an extension of $p\colon S^{2k}\to S^{2k}$ by $-\alpha_1+\left(\frac{a}{2}+1\right)\alpha_2$. Then $\theta_2\vert_{S^{2k}}\ne 0$, and by Lemma \ref{Toda bracket} and Proposition \ref{relation}, the composite $\theta_2\circ\varphi_3$ belongs to the Toda bracket
  \[
   \frac{a+1}{2}\langle p,\alpha_1,\alpha_2\rangle+\left(\frac{a}{2}+1\right)\langle p,\alpha_2,\alpha_1\rangle=
    \begin{cases}
      \frac{3a+4}{2}\alpha_3'&p=3\\
      \frac{3a+4}{6}\alpha_3&p>3.
    \end{cases}
  \]
  Thus $\theta_2$ detects $\varphi_3$, completing the proof.
  \end{proof}

\begin{lemma}
  \label{g 1}
  Given an integer $k,r$ with $1\le r<p(p-1)$, suppose the following conditions:
  \begin{enumerate}
    \item $k(k-r+1)\not\equiv 0\mod p$;

    \item under the above condition, for any integer $3\le t\le r$ satisfying $k-t+2\equiv 0\mod p$, then we further assume
    \begin{alignat*}{3}
      &3a(k,t)+5\not\equiv 0&&\mod p&&(t\equiv 0\mod p)\\
      &a(k,t)+2\not\equiv 0&&\mod p&&(t\equiv 1\mod p)\\
      &3a(k,t)+4\not\equiv 0&&\mod p&&(t\equiv 3\mod p)\\
      &(2t-3)a(k,t)+3t-5\not\equiv 0&&\mod p\qquad&&(t \equiv 4, 5, \cdots , p-1\mod p).
    \end{alignat*}
  \end{enumerate}
  Then there is a map $\theta_{r-1}\colon Y_k^{r-1}\to S^{2k}$ detecting $\varphi_r$.
\end{lemma}

\begin{proof}
  We proceed by induction on $r$ satisfying $k-r+1\not\equiv 0\mod p$. Note that we are considering not all $r$ but satisfying $k-r+1\not\equiv 0\mod p$, for which we can perform induction. For $r=1$, we have $k-r+1=k\not\equiv 0\mod p$ by assumption. Let $\theta_0\colon Y_k^0=S^{2k}\to S^{2k}$ be the identity map of $S^{2k}$. Since $k\not\equiv 0\mod p$, we have $\varphi_1=\alpha_1$ by Lemma \ref{rank 1}. Then we have $\varphi_1^*(\theta_0)=\alpha_1$, and so $\theta_0$ detects $\varphi_1$ by Theorem \ref{homotopy group 2}. For $r=2$, we only need to consider the case $k\not\equiv 1\mod p$ because we are assuming 
  $k-r+1\not\equiv 0 \mod p$. Then by Lemma \ref{composition rank 1}, we get a map $\theta_1$ detecting $\varphi_2$. Suppose $r=3$. If $k\not\equiv 1\mod p$, then we have $\theta_1$ as above, and so by Lemma \ref{composition rank 1}, we get a map $\theta_2$ detecting $\varphi_3$, where we are assuming $k-r+1\not\equiv 0\mod p$. If $k\equiv 1\mod p$, then we can apply Lemma \ref{r=3} to get a map $\theta_2$ detecting $\varphi_3$, 
  where we are assuming $3a(k,3)+4\not\equiv 0\mod p$. 
  Now we assume that for each $4\le t\le r-1$ with $k-t+1\not\equiv 0\mod p$, there is a map $\theta_{t-1}$ detecting $\varphi_t$. If $k-r+2\not\equiv 0\mod p$, then by the induction hypothesis, we have $\theta_{r-2}$, and so by Lemma \ref{composition rank 1} and the assumption $k-r+1\not\equiv 0\mod p$, we get a map $\theta_{r-1}$ detecting $\varphi_r$. If $k-r+2\equiv 0\mod p$, then $k-r+5\not\equiv 0\mod p$, and so by the induction hypothesis, we have $\theta_{r-4}$ detecting $\varphi_{r-3}$. Thus by Lemma \ref{detection}, we also get a map $\theta_{r-1}$ detecting $\varphi_r$, completing the proof.
\end{proof}

\begin{lemma}\label{gen}
If $\theta_{r-2}$
detects
$\varphi_{r-1}$, then 
the extension of $q_{r-1}\circ\theta_{r-2}$ by $\varphi_{r-1}$ is a generator 
in $\pi^{2k}(Y_k^{r-1})/\mathrm{Tor}$.
\end{lemma}

\begin{proof}
The map
$Y_k^{r-1} \to S^{2k}$ is the extension of the restriction of $Y_k^{r-2}$ by $\varphi_{r-1}$.
Consider the exact sequence
  \[
    \pi^{2k}(Y_k^{r-1})\to\pi^{2k}(Y_k^{r-2})\xrightarrow{\varphi_{r-1}^*}\pi^{2k}(S^{2k+2(r-1)(p-1)-1})
  \]
  induced from a cofiber sequence $S^{2k+2(r-1)(p-1)-1}\xrightarrow{\varphi_{r-1}}Y_k^{r-2}\to Y_k^{r-1}$.
Because 
$\theta_{r-2}$ detects $\varphi_{r-1}$, it follows from the exact sequence
that 
if we restrict 
any map $Y_k^{r-1}\to S^{2k}$
on $Y_k^{r-2}$, then 
the restriction is a multiple of 
$q_{r-1}\circ\theta_{r-2}$.
Hence, 
if we extend $q_{r-1}\circ\theta_{r-2}$ by $\varphi_{r-1}$,
then the extension   is a ganerator in 
$\pi^{2k}(Y_k^{r-1})/\mathrm{Tor}$.
\end{proof}

\begin{lemma}
  \label{g 2}
  Suppose that there is a map $\theta_{r-2}\colon Y_k^{r-2}\to S^{2k}$ detecting $\varphi_{r-1}$. Then the map $j_r^*\colon\pi^{2k}(Y_k^r)/\mathrm{Tor}\to \pi^{2k}(Y_k^{r-1})/\mathrm{Tor}$ is an isomorphism whenever $k-r+1\equiv 0\mod p$.
\end{lemma}

\begin{proof}  
  By the assumption and Lemma \ref{gen},   
  an extension $\phi\colon Y_k^{r-1}\to S^{2k}$ of $q_{r-1}\circ\theta_{r-2}$ by $\varphi_{r-1}$ is a generator of $\pi^{2k}(Y_k^{r-1})/\mathrm{Tor}$. By Lemma \ref{Toda bracket}, the composite $\phi\circ\varphi_r$ belongs to the Toda bracket
  \[
    \langle q_{r-1},\theta_{r-2}\circ\varphi_{r-1}, \Sigma^{-1} \rho\circ\varphi_r\rangle,
  \]
  where $\rho\colon Y_k^{r-1}\to S^{2k+2(r-1)(p-1)}$ is the pinch map onto the top cell.
   The indeterminacy of this Toda bracket is
 \[
 q_{r-1}(\pi_{2k+2r(p-1)}(S^{2k}))+ \pi_{2k+2(r-1)(p-1)+1}(S^{2k})\circ  \rho_*(\varphi_r) \qquad (*)
 \]
The first term $q_{r-1}(\pi_{2k+2r(p-1)}(S^{2k}))=0$ vanishes by
Theorem \ref{homotopy group 2}.
We claim $ \rho_*(\varphi_r) =0$. 
  Since $k+(r-2)(p-1)\equiv 1\mod p$ by the assumprtion, Lemma \ref{rank 4} implies that 
  \[
  Y^3_{k+(r-2)(p-1)} = Y^r_k/Y^{r-3}_k = S^{2k+2(r-2)(p-1)}\cup_{-\alpha_1} e^{2k+2(r-1)(p-1)} \cup_{\left(\frac{a}2+1\right)\alpha_2} e^{2k+2r(p-1)}.
  \]   
Note that
  $\varphi_r = (\frac{a}{2}+1) \alpha_2$ and 
  $\alpha_2\colon \partial e^{2k+2r(p-1)} \to S^{2k+2(r-2)(p-1)}$
  is  the attaching map to $S^{2k+2(r-2)(p-1)}$.
  On the other hand, $\rho$ is a map that collapses 
  $S^{2k+2(r-2)(p-1)}$.
  These imply $\rho_*(\varphi_r)=0$.

Hence, $(*)=0$ and the above Toda bracket  consists of a single element. Since $k-r+1\equiv 0\mod p$, we have $\Sigma^{-1} \rho\circ\varphi_r=0$ by Lemma \ref{rank 4}. Then the above Toda bracket includes 0, implying the Toda bracket is trivial. Thus we obtain $\phi\circ\varphi_r=0$.

 Now we consider the exact sequence
  \[
    \pi^{2k}(Y_k^r)\to\pi^{2k}(Y_k^{r-1})\xrightarrow{\varphi_r^*}\pi^{2k}(S^{2k+2r(p-1)-1})
  \]
  induced from a cofiber sequence $S^{2k+2r(p-1)-1}\xrightarrow{\varphi_r^*}Y_k^{r-1}\to Y_r^r$. By the above computation, the map $\varphi_r^*\colon\pi^{2k}(Y_k^{r-1})\to\pi^{2k}(S^{2k+2r(p-1)-1})$ is trivial, implying that the map $j_r^*\colon\pi^{2k}(Y_k^r)/\mathrm{Tor}\to \pi^{2k}(Y_k^{r-1})/\mathrm{Tor}$ is surjective. By Lemma \ref{X-Y}, $\pi^{2k}(Y_k^r)/\mathrm{Tor}\cong\pi^{2k}(Y_k^{r-1})/\mathrm{Tor}\cong\Z_{(p)}$. Thus since any surjection $\Z_{(p)}\to\Z_{(p)}$ is an isomorphism, which contradicts to our assumption. Therefore the proof is finished.
\end{proof}

Now we are ready to prove:

\begin{theorem}
  \label{p odd}
  Given an integer $k,r$ with $1\le r<p(p-1)$, suppose the following conditions:
  \begin{enumerate}
    \item $k\not\equiv 0,1,\ldots,r_p-1\mod p$;

    \item under the above condition, for any integer $3\le t\le r$ satisfying $t-2\equiv k\mod p$, we further assume
    \begin{alignat*}{3}
      &3a(k,t)+5\not\equiv 0&&\mod p&&(t\equiv 0\mod p \text{ and }t\ge p>3)\\
      &a(k,t)+2\not\equiv 0&&\mod p&&(t\equiv 1\mod p\text{ and }t>p)\\
      &3a(k,t)+4\not\equiv 0&&\mod p&&(t\equiv 3\mod p)\\
      &(2t-3)a(k,t)+3t-5\not\equiv 0&&\mod p\qquad&&(t \equiv 4, 5, \cdots , p-1\mod p).
    \end{alignat*}
  \end{enumerate}
  Then the natural map $\pi^{2k}(\C P^{k+r(p-1)})/\mathrm{Tor}\to\pi^{2k}(\C P^k)/\mathrm{Tor}$ is identified with $p^r\colon\Z_{(p)}\to\Z_{(p)}$.
\end{theorem}

\begin{proof}
  By Proposition \ref{reduction}, the map $\pi^{2k}(\C P^{k+r(p-1)})/\mathrm{Tor}\to\pi^{2k}(\C P^k)/\mathrm{Tor}$ is identified with $j_1^*\circ j_2^*\circ\cdots\circ j_r^*\colon\pi^{2k}(Y_k^r)/\mathrm{Tor}\to\pi^{2k}(Y_k^0)/\mathrm{Tor}$. Observe that $r=pq+r_p$ for a non-negative integer $q$ by the definition of $r_p$. Let $0\le s<q$. Then:
  
  \begin{itemize}
  \item
  The map $j_t^*$ is identified with $p\colon\Z_{(p)}\to\Z_{(p)}$ by Theorem \ref{homotopy group 2} and Lemmas \ref{q_t} and \ref{detection}
     for $ps<t<p(s+1)$ with $k-t+1\not\equiv 0\mod p$.
   
   \item 
   The map  $j_{p(s+1)}^*$ is identified with $p^2\colon\Z_{(p)}\to\Z_{(p)}$. 
   
   \item
  There is exactly one $t$ such that $ps<t<p(s+1)$ with $k-t+1\equiv 0\mod p$, for which the map $j_t^*$ is identified with $1\colon\Z_{(p)}\to\Z_{(p)}$ by Lemma \ref{g 2}. 
   
   \end{itemize}
   Then, the composite $j_{ps+1}^*\circ j_{ps+2}^*\circ\cdots\circ j_{p(s+1)}^*$ is identified with $p^p\colon\Z_{(p)}\to\Z_{(p)}$. 
   
   If $pq<t\le r_p$, then $k-t+1\not\equiv 0\mod p$ follows, because
      $k\not\equiv 0,1,\ldots,r_p-1\mod p$. Hence,
     the map $j_t^*$ is identified with $p\colon\Z_{(p)}\to\Z_{(p)}$ by Theorem \ref{homotopy group 2} and Lemmas \ref{q_t} and \ref{detection}. Hence the composite $j_{pq+2}^*\circ j_{pq+3}^*\circ\cdots\circ j_{pq+r_p}^*$ is identified with $p^{r_p}\colon\Z_{(p)}\to\Z_{(p)}$. Thus the composite $j_1^*\circ j_2^*\circ\cdots\circ j_r^*$ is identified with
  \[
    \underbrace{p^p\times\cdots\times p^p}_q\times p^{r_p}=p^{pq+r_p}=p^r\colon\Z_{(p)}\to\Z_{(p)},
  \]
  completing the proof.
\end{proof}

Recall that we have assumed for $p$ to be odd prime at the first paragraph of Section
\ref{Cohomotopy computation 2}.
Below we consider the $p=2$ case.

\begin{proposition}
  \label{p=2}
  If $p=2$ and $k\not\equiv 0\mod p$, then the map $\pi^{2k}(\C P^{k+1})/\mathrm{Tor}\to\pi^{2k}(\C P^k)/\mathrm{Tor}$ is identified with $p\colon\Z_{(p)}\to\Z_{(p)}$.
\end{proposition}

\begin{proof}
  Consider the exact sequence
  \[
    \pi^{2k-1}(\C P^{k-1})\to\pi^{2k}(\C P^n/\C P^{k-1})\to\pi^{2k}(\C P^n)\to\pi^{2k}(\C P^{k-1})
  \]
  induced from the homotopy cofibration $\C P^{k-1}\to\C P^n\to\C P^n/\C P^{k-1}$ for $n\ge k$. Since $\C P^{k-1}$ is of dimension $2k-2$, we have $\pi^{2k-1}(\C P^{k-1})=\pi^{2k}(\C P^{k-1})=0$, and so the natural map $\pi^{2k}(\C P^n/\C P^{k-1})\to\pi^{2k}(\C P^n)$ is an isomorphism. Note that the inclusion $\C P^k\to\C P^{k+1}$ induces a commutative diagram
  \[
    \xymatrix{
      \pi^{2k}(\C P^{k+1}/\C P^{k-1})\ar[r]^(.57)\cong\ar[d]&\pi^{2k}(\C P^{k+1})\ar[d]\\
      \pi^{2k}(\C P^k/\C P^{k-1})\ar[r]^(.57)\cong&\pi^{2k}(\C P^k).
    }
  \]
  Then the map $\pi^{2k}(\C P^{k+1})/\mathrm{Tor}\to\pi^{2k}(\C P^k)/\mathrm{Tor}$ is identified with the map
  \[
    \pi^{2k}(\C P^{k+1}/\C P^{k-1})/\mathrm{Tor}\to\pi^{2k}(\C P^k/\C P^{k-1})/\mathrm{Tor}.
  \]

  Clearly, $\C P^k/\C P^{k-1}\cong S^{2k}$ holds. Because $k\not\equiv 0 \mod 2$, it is well known that $\C P^{k+1}\simeq S^{2k}\cup_\eta e^{2k+2}$ such that the inclusion $\C P^k/\C P^{k-1}\to \C P^{k+1}/\C P^{k-1}$ is identified with the bottom cell inclusion, where $\eta$ is a generator of $\pi_{2k+1}(S^{2k})\cong\Z/2$. Consider the exact sequence
  \[
    \pi^{2k}(\C P^{k+1}/\C P^{k-1})\to\pi^{2k}(\C P^k/\C P^{k-1})\to\pi^{2k}(S^{2k+1})
  \]
  induced from the cofiber sequence $S^{2k+1}\to\C P^k/\C P^{k-1}\to\C P^{k+1}/\C P^{k-1}$. By the above observation, this exact sequence is identified with the exact sequence
  \[
    \pi^{2k}(S^{2k}\cup_\eta e^{2k+2})\to\pi^{2k}(S^{2k})\xrightarrow{\eta^*}\pi^{2k}(S^{2k+1})=\pi_{2k+1}(S^{2k})
  \]
  induced from the cofiber sequence $S^{2k+1}\xrightarrow{\eta}S^{2k}\to S^{2k}\cup_\eta e^{2k+2}$. Since $\eta^*(1)=\eta$ and $\pi_{2k+1}(S^{2k})$ is generated by $\eta$, the second map is surjective. Then the map $\pi^{2k}(\C P^{k+1}/\C P^{k-1})/\mathrm{Tor}\to\pi^{2k}(\C P^k/\C P^{k-1})/\mathrm{Tor}$ is identified with $p\colon\Z_{(p)}\to\Z_{(p)}$ with $p=2$, completing the proof.
\end{proof}

Finally, we prove Theorem \ref{cohomotopy CP'}.

\begin{proof}
  [Proof of Theorem \ref{cohomotopy CP'}]
  Combine Theorem \ref{p odd} and Proposition \ref{p=2} below.
\end{proof}


\section{Inexplicit upper bound}\label{inexplicit upper bound}

This section explains an upper bound mentioned in Section \ref{introduction}. So it is completely independent from other sections, and does not contain any result. Let $X$ be a closed, connected, oriented and smooth four-manifold, and let $\mathfrak{s}$ be a $\spin^c$ structure on $X$. We recall a result of Bauer and Furuta \cite[Theorem 3.7]{BF}.

\begin{theorem}
  \label{divisibility}
  If $b_2^+\ge 2$ and $b_1=0$, then $SW(\mathfrak{s})$ is divisible by the denominator of $a_i^{(k)}$ for $1\le i\le d(\mathfrak{s})/2$, where $k=(b_2^+-1)/2$ and $a_i^{(k)}$ is defined by
  \[
    \left(-\frac{\log(1-x)}{x}\right)^k=\left(1+\frac{x}{2}+\frac{x^2}{3}+\cdots+\frac{x^{n-1}}{n}+\cdots\right)^k=1+\sum_{i\ge 1}a_i^{(k)}x^i.
  \]
\end{theorem}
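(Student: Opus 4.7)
The plan is to lift the Bauer–Furuta invariant from stable cohomotopy to complex $K$-theory via the unit map $\mathbb{S}\to KU$ and then compare with the Hurewicz image through the Chern character on $\C P^{d-1}$. Since $SW(\mathfrak{s})=0$ when $b_2^+$ is even, I assume $b_2^+$ odd, set $k=(b_2^+-1)/2$ and $2d=d(\mathfrak{s})+1+b_2^+$, so that $d-1-k=d(\mathfrak{s})/2$. Then $BF(\mathfrak{s})\in\pi^{2k}(\C P^{d-1})$ has Hurewicz image $SW(\mathfrak{s})\,u^k$, with $u$ a generator of $H^2(\C P^{d-1};\Z)$.

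Apply the unit $\mathbb{S}\to KU$ followed by Bott periodicity to send $BF(\mathfrak{s})$ to a class
\[
\gamma=\sum c_i x^i \in \widetilde{KU}(\C P^{d-1})=\Z\{x,\ldots,x^{d-1}\},\qquad c_i\in\Z.
\]
I would take $x=1-L^{-1}$, where $L$ is the hyperplane bundle with $c_1(L)=u$; this sign convention gives $v:=\mathrm{ch}(x)=1-e^{-u}$, which inverts to $u=-\log(1-v)$, matching the series in the statement (whereas $x=L-1$ would produce the sibling series $(\log(1+x)/x)^k$). Because $\gamma$ is the pullback along $BF(\mathfrak{s})\colon\C P^{d-1}\to S^{2k}$ of the Bott generator $\beta_k\in\widetilde{KU}(S^{2k})$, and $\mathrm{ch}(\beta_k)$ is the integral generator of $H^{2k}(S^{2k};\Z)$, naturality of the Chern character forces $\mathrm{ch}(\gamma)=SW(\mathfrak{s})\,u^k$, concentrated in a single degree.

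Expanding gives
\[
u^k=(-\log(1-v))^k=v^k\left(\frac{-\log(1-v)}{v}\right)^k=\sum_{j\ge 0}a_j^{(k)}\,v^{k+j},
\]
and since $\{v,v^2,\ldots,v^{d-1}\}$ is a $\Q$-basis of $\widetilde{H}^{\mathrm{even}}(\C P^{d-1};\Q)$, matching coefficients of $v^i$ in $\sum c_iv^i=SW(\mathfrak{s})\sum_j a_j^{(k)}v^{k+j}$ forces $c_i=0$ for $i<k$ and $c_{k+j}=SW(\mathfrak{s})\,a_j^{(k)}$ for $0\le j\le d-1-k=d(\mathfrak{s})/2$. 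Integrality $c_{k+j}\in\Z$ is exactly the claim that the denominator of $a_j^{(k)}$ divides $SW(\mathfrak{s})$ for $1\le j\le d(\mathfrak{s})/2$.

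The main hurdle I anticipate is pinning down the compatibility of the unit $\mathbb{S}\to KU$, Bott periodicity $KU^{2k}\cong KU^0$, and the Chern character precisely enough to extract $\mathrm{ch}(\gamma)=SW(\mathfrak{s})\,u^k$ as a class concentrated in a single degree. Once this compatibility is in place, the rest is formal series inversion; the only subtlety is the sign/normalization, reflected in the choice $x=1-L^{-1}$, which is what recovers the exact series $(-\log(1-x)/x)^k$ appearing in the statement.
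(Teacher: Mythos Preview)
The paper does not supply its own proof of this statement: Theorem~\ref{divisibility} is quoted verbatim as \cite[Theorem~3.7]{BF}, and Section~\ref{easy upper bound} explicitly says it ``does not contain any result.'' So there is nothing in the paper to compare against beyond the citation.

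Your argument is correct and is essentially the Bauer--Furuta argument (as the paper itself hints in Remark~\ref{remark BF}, where it says their method ``connect[s] cohomotopy groups to $K$-theory''). Pulling back the Bott generator along $BF(\mathfrak{s})$ to obtain an integral class in $\widetilde{KU}(\C P^{d-1})$, and then comparing its Chern character with the Hurewicz image $SW(\mathfrak{s})\,u^k$ via the change of basis $u=-\log(1-v)$, is exactly the mechanism that produces the divisibility by the denominators of the $a_i^{(k)}$. Your bookkeeping ($d-1-k=d(\mathfrak{s})/2$, the choice $x=1-L^{-1}$ so that $\mathrm{ch}(x)=1-e^{-u}$ yields the series with the sign as stated) is accurate. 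The one point you flag as a ``hurdle''---that $\mathrm{ch}(\beta_k)$ is the integral generator of $H^{2k}(S^{2k};\Z)$ and hence $\mathrm{ch}(\gamma)=BF(\mathfrak{s})^*\mathrm{ch}(\beta_k)=SW(\mathfrak{s})\,u^k$ is concentrated in degree $2k$---is a standard fact about the Bott class and the naturality of the Chern character, so it is not a genuine obstacle.
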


Let $d(q,k)$ denote the greatest integer $2d$ such that the denominator of $a_i^{(k)}$ is not divisible by $q$ for $1\le i\le d$. By Theorem \ref{divisibility}, we get that if $\mathfrak{s}$ is a mod $p$ basic class for a prime $p$ and $k=(b_2^+-1)/2$, then there is an inequality
\[
  d(\mathfrak{s})\le d(p,k).
\]
By putting $x=e^y-1$, we can see that the numbers $a_i^{(k)}$ are computed from the Bernoulli numbers, and vice versa. Then it is quite hard to determine or evaluate $a_i^{(k)}$, in general. Thus the upper bound $d(p,k)$ is rather inexplicit, in general. However, we can compute $d(p,q)$ in the following two special cases. First, we clearly have $d(q,1)=2q-4$. Then as mentioned in Section \ref{introduction}, for a prime $p$ and $k=1$, our upper bound $2r(p-1)-2$ in Theorem \ref{main2} is much sharper than the upper bound $d(p^r,1)=2p^r-4$, except for a few cases. Second, we let for an integer $1\le r<p$. Then since $r(p-1)<p^2+1$, in the expansion of
\[
  \left(1+\frac{x}{2}+\frac{x^2}{3}+\cdots+\frac{x^{n-1}}{n}+\cdots\right)^k,
\]
the coefficient of $x^{r(p-1)}$ is
\[
  \frac{1}{\lambda_1p^{r_1}}+\cdots+\frac{1}{\lambda_np^{r_n}}+\binom{k}{r}\frac{1}{p^r}=\frac{p(\lambda_1p^{r-r_1}+\cdots+\lambda_np^{r-r_n})+\lambda_1\cdots\lambda_n\binom{k}{r}}{\lambda_1\cdots\lambda_n p^r}
\]
where $\lambda_1,\ldots,\lambda_n\not\equiv 0\mod p$ and $r_1,\ldots,r_n<r$. If $k\not\equiv 0,1,\ldots,r-1\mod p$, then $\binom{k}{r}\not\equiv 0\mod p$, and so the numerator is not divisible by $p$. Hence $d(p^r,k)\ge 2r(p-1)-2$. On the other hand, we can see that the denominator of the coefficient of $x^i$ for $i<r(p-1)$ is not divisible by $p^r$ quite similarly. Then we get $d(p^r,k)\ge 2r(p-1)-2$, hence $d(p^r,k)=2r(p-1)-2$. This gives an alternative proof of Corollary \ref{main3}, where the existence of an integer $k$ satisfying $k\not\equiv 0,1,\ldots,r-1\mod p$ implies $r<p$.

\end{document}